\newcommand\cyr{%
\renewcommand\rmdefault{wncyr}%
\renewcommand\sfdefault{wncyss}%
\renewcommand\encodingdefault{OT2}%
\normalfont
\selectfont}
\DeclareTextFontCommand{\textcyr}{\cyr}
\DeclareFontFamily{OT1}{rsfs}{}
\DeclareFontShape{OT1}{rsfs}{n}{it}{<-> rsfs10}{}
\DeclareMathAlphabet{\mathscr}{OT1}{rsfs}{n}{it}
\numberwithin{equation}{section}
\newtheorem{theorem}{Theorem}[section]
\newtheorem{lemma}[theorem]{Lemma}
\newtheorem{corollary}[theorem]{Corollary}
\newtheorem{claim}[theorem]{Claim}
\newtheorem{question}{Question}
\theoremstyle{definition}
\newtheorem{definition}[theorem]{Definition}
\theoremstyle{remark}
\newtheorem{example}[theorem]{Example}
\newcommand{\Ass}{\operatorname{Ass}}
\newcommand{\Assh}{\operatorname{Assh}}
\newcommand{\Ext}{\operatorname{Ext}}
\newcommand{\Supp}{\operatorname{Supp}}
\newcommand{\Hom}{\operatorname{Hom}}
\newcommand{\Ann}{\operatorname{Ann}}
\newcommand{\depth}{\operatorname{depth}}
\newcommand{\fq}{\frak{q}}
\newcommand{\calD}{\mathcal{D}}
\newcommand{\fka}{\mathfrak{a}}
\newcommand{\fkm}{\mathfrak{m}}
\newcommand{\fkn}{\mathfrak{n}}
\newcommand{\fkp}{\mathfrak{p}}
\newcommand{\fkq}{\mathfrak{q}}
\newcommand{\fkt}{\mathfrak{t}}
\newcommand{\fku}{\mathfrak{u}}
\newcommand{\q}{\mathfrak{q}}
\newcommand{\m}{\mathfrak{m}}
\def\H{\operatorname{H}}
\def\depth{\operatorname{depth}}
\def\Supp{\operatorname{Supp}}
\def\dim{\operatorname{dim}}
\def\Ann{\operatorname{Ann}}
\def\Ass{\operatorname{Ass}}
\def\Assh{\operatorname{Assh}}
\def\e{\operatorname{e}}
\begin{document}
\title{A note on Chern coefficients and  Cohen-Macaulay rings}

\author [N.T.T. Tam]{Nguyen Thi Thanh Tam}
\address{Hung Vuong University, Viet Tri, Phu Tho, Vietnam}
\email{thanhtamnguyenhv@gmail.com }

\author[H.L. Truong]{Hoang Le Truong}
\address{	Mathematik und Informatik, Universit\"{a}t des Saarlandes, Campus E2 4,
	D-66123 Saarbr\"{u}cken, Germany}
\address{Institute of Mathematics, VAST, 18 Hoang Quoc Viet Road, 10307
Hanoi, Viet Nam}
\address{Thang Long Institute of Mathematics and Applied Sciences, Hanoi, Vietnam}
\email{hoang@math.uni-sb.de\\hltruong@math.ac.vn\\
	truonghoangle@gmail.com}
	
\thanks{2010 {\em Mathematics Subject Classification\/}: 13H10, 13D45, 13A15, 13H15.\\
This work is partially supported by a fund of Vietnam National Foundation for Science
and Technology Development (NAFOSTED) under grant number
101.04-2017.14. }

\keywords{Index of reducibility, Cohen-Macaulay, Irreducible multiplicity, Chern number, Hilbert coefficients.}


\begin{abstract} 
		In this paper, we investigate the relationship between the index of reducibility and Chern coefficients for primary ideals. As an application, we give characterizations of a Cohen-Macaulay ring in terms of its type, irreducible multiplicity, and Chern coefficients with respect to certain parameter ideals in Noetherian local rings.
\end{abstract}

\maketitle

\section{Introduction}
Throughout this paper, let $(R,\m)$ be a homomorphic image of a Cohen-Macaulay local ring with the infinite residue field $k$, $\dim R=d>0$, and $M$ a finitely generated $R$-module of dimension $s$. A submodule $N$ of $M$ is called an {\it irreducible submodule} if $N$ can not be written as an intersection of two properly larger submodules of $M$. The number of irreducible components of an irredundant irreducible decomposition of $N$, which is independent of the choice of the decomposition  by E. Noether \cite{N21}, is called the {\it index of reducibility} of $N$ and denoted by $\mathrm{ir}_M(N)$.  For an $\m$-primary ideal $I$ of $M$, {\it the index of reducibility} of $I$
on $M$ is the index of reducibility of $I M$, and denoted by $\mathrm{ir}_M(I)$. Moreover, we have $\mathrm{ir}_M(I) = \dim_k \mathrm{Soc}(M/I M)$, where we denote by $\mathrm{Soc}(N)$ the dimension of the socle of an $R$-module $N$ as a $k$-vector space.
 In the case $I$ is a parameter ideal of $M$, several properties of $\mathrm{ir}_M(I)$ had been found and played essential roles in the earlier stage of development of the theory of Gorenstein rings and/or Cohen-Macaulay rings. Recently, the index of reducibility of parameter ideals has been used to deduce a lot of information on the structure of some classes of modules, such as regular local rings by W. Gr\"{o}bner \cite{G51}; Gorenstein rings by Northcott, Rees \cite{N21,N57,Tr14,Tr19}; Cohen-Macaulay modules by D.G. Northcott, N.T. Cuong, P.H. Quy \cite{CQT15,Tr14,Tr15,Tr19}; Buchsbaum modules by S. Goto, N. Suzuki and H. Sakurai \cite{GS03,GS84}; generalized Cohen-Macaulay modules by N.T. Cuong, P.H. Quy and the second author \cite{CQ11, CT08} (see also \cite{Tr13, Tr19} for other modules). The aim of our paper is to continue this research direction. Concretely, we will give characterizations of a Cohen-Macaulay ring in terms of its {\it type} and Chern coefficients with respect to $g$-parameter ideals (See Definition \ref{gsp}).
Recall that {\it type} of a module $M$ was first introduced  by S. Goto and N. Suzuki \cite{GS84}, and is defined  as the supremum 
$$\mathrm{r}(M)=\sup_{\fkq}\mathrm{ir}_M(\fkq),$$ where $\fkq$ runs through the parameter ideals of $M$.  
It well-known  that there are integers $\e_i(I;M)$, called the {\it Hilbert coefficients} of $M$ with respect to $I$ such that for $n \gg 0$
\begin{eqnarray*}
 \ell_R(M/{I^{n+1}}M)={\e}_0(I;M) \binom{n+s}{s}-{\e}_1(I;M) \binom{n+s-1}{s-1}+\cdots+(-1)^s {\e}_s(I;M).
\end{eqnarray*}
Here $\ell_R(N)$ denotes, for an $R$-module $N,$ the length of $N.$ In particular, the leading coefficient $\e_0(I)$ is said to be {\it the multiplicity} of $M$ with respect to $I$ and $\e_1(I)$, which Vasconselos (\cite{V8}) refers to as the {\it Chern coefficient} of $M$ with respect to $I$. 
Now our motivation stems from the work of  \cite{V8}. Vasconcelos posed {\it the Vanishing Conjecture}:  $R$ is a Cohen–Macaulay local ring if and only if $\e_1(\q, R) = 0$ for some parameter ideal $\q$ of $R$.
It is shown that the relation between Cohen-Macaulayness and the Chern number of parameter ideals is quite surprising.  In \cite{Tr14}, motivated by some deep results of \cite{CGT,GNi} and also by the fact that this is true for $R$ is unmixed as shown in \cite{GGHOPV}, it was asked whether  the characterization of the Cohen-Macaulayness in terms of the Chern number of non-parameter ideals and the type of $R$ in the case that $R$ is mixed. Concretely,  the goal of this note is to understand the nature of  the following open question.

\begin{question}\rm 
     Is  $R$ is Cohen-Macaulay if and only if there exists a parameter ideal $\q$ of $R$ such that
$$\mathrm{r}(R)\le \e_1(\q:\m)-\e_1(\q).$$


\end{question}

Our main result partially answers the question in the following way.

\medskip

\begin{theorem}\label{T6.6}
Assume that $d=\dim R\ge 2$. The following statements are equivalent.
\begin{enumerate}[{(i)}] \rm
    \item {\it $R$ is Cohen-Macaulay.}

 \item {\it For all parameter ideals $\frak q$, we
    have}
$$\mathrm{r}(R)\le \e_1(\q:\m)-\e_1(\q).$$

    \item {\it For some $g$-parameter ideal $\frak q\subseteq \m^2$, we
    have}
$$\mathrm{r}(R)\le \e_1(\q:\m)-\e_1(\q).$$
\end{enumerate}

\end{theorem}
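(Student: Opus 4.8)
The plan is to establish the equivalences through the cycle (i) $\Rightarrow$ (ii) $\Rightarrow$ (iii) $\Rightarrow$ (i). The engine of the whole argument is the comparison between the index of reducibility and the Chern coefficients obtained earlier in the paper, which for a $g$-parameter ideal $\q\subseteq\m^2$ I read as a bound of the shape $\e_1(\q:\m)-\e_1(\q)\le \mathrm{ir}_R(\q)$, together with the tautological inequality $\mathrm{ir}_R(\q)\le \mathrm{r}(R)$ coming from the definition of the type as a supremum. Two further standing facts will be used repeatedly: in a Cohen--Macaulay ring every parameter ideal $\q$ is generated by a regular sequence, so $\gr_{\q}(R)$ is a polynomial ring over $R/\q$ and hence $\e_1(\q)=0$; and, again in the Cohen--Macaulay case, the socle dimension $\dim_k\mathrm{Soc}(R/\q)=\mathrm{ir}_R(\q)$ equals the Cohen--Macaulay type for \emph{every} parameter ideal, so that $\mathrm{ir}_R(\q)=\mathrm{r}(R)$.

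For (i) $\Rightarrow$ (ii): assume $R$ is Cohen--Macaulay and write $I=\q:\m$. Since $\q\subseteq\m^2$ forces $I\subseteq\m$, from $\m I\subseteq\q$ we get $I^2\subseteq\m I\subseteq\q$, and the defining property of a $g$-parameter upgrades this to the reduction $I^2=\q I$, whence $I^{n+1}=\q^n I$ for all $n\ge 0$. Feeding this into the exact sequence $0\to \q^n I/\q^{n+1}\to R/\q^{n+1}\to R/I^{n+1}\to 0$ and using that $\q^n/\q^{n+1}$ is $R/\q$-free while $I/\q\cong k^{\,\mathrm{ir}_R(\q)}$, I would compute $\ell_R(R/I^{n+1})=\e_0(\q)\binom{n+d}{d}-\mathrm{ir}_R(\q)\binom{n+d-1}{d-1}$, so that $\e_1(\q:\m)=\mathrm{ir}_R(\q)$ and the higher coefficients vanish. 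Combined with $\e_1(\q)=0$ and $\mathrm{ir}_R(\q)=\mathrm{r}(R)$ this yields the equality $\mathrm{r}(R)=\e_1(\q:\m)-\e_1(\q)$, which in particular gives (ii).

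The implication (ii) $\Rightarrow$ (iii) is immediate: since the residue field $k$ is infinite, a generic choice of a system of parameters produces $g$-parameter ideals $\q\subseteq\m^2$ in the sense of Definition \ref{gsp}, and the inequality asserted in (ii) then holds for any one of them.

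For (iii) $\Rightarrow$ (i), the heart of the proof, let $\q\subseteq\m^2$ be a $g$-parameter ideal satisfying $\mathrm{r}(R)\le \e_1(\q:\m)-\e_1(\q)$. Stacking this against the two a priori inequalities recalled above gives $\mathrm{r}(R)\le \e_1(\q:\m)-\e_1(\q)\le \mathrm{ir}_R(\q)\le \mathrm{r}(R)$, forcing equality throughout; in particular $\e_1(\q:\m)-\e_1(\q)=\mathrm{ir}_R(\q)$. The main obstacle is to deduce Cohen--Macaulayness from this single equality. I expect to do so by refining the earlier comparison into an exact formula expressing the defect $\mathrm{ir}_R(\q)-\big(\e_1(\q:\m)-\e_1(\q)\big)$ as a nonnegative combination of the lengths $\ell_R(\H^i_{\m}(R))$ for $i<d$ — the kind of local-cohomology expression available precisely because $\q$ is a $g$-parameter — so that its vanishing forces $\H^i_{\m}(R)=0$ for all $i<d$, i.e.\ $R$ is Cohen--Macaulay. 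The delicate points are to show that the chosen $g$-parameter genuinely controls all the lower local cohomology, not merely its top piece, and that the reduction-number-one behaviour used in the Cohen--Macaulay computation degrades in a measurable, sign-definite way in the non-Cohen--Macaulay case; this sign-definiteness is exactly what makes a single ideal sufficient to detect the global property.
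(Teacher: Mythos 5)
Your proof of the crucial implication (iii) $\Rightarrow$ (i) rests on two claims that are neither proved in the paper nor justified in your sketch, and both are problematic. First, the pivotal inequality $\e_1(\q:\m)-\e_1(\q)\le \mathrm{ir}_R(\q)$ is not ``the comparison obtained earlier in the paper'': what the paper actually proves (inside the proof of this very theorem, and only under the hypothesis $\e_0(\m;R)>1$, via Proposition 2.3 of \cite{GS03}) is $\e_1(\q:\m)-\e_1(\q)\le f_0(\q;R)$, a bound by the \emph{asymptotic} irreducible multiplicity, i.e.\ the leading coefficient of $n\mapsto \mathrm{ir}_R(\q^{n+1})$, not by $\mathrm{ir}_R(\q)$ itself. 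Nor is $f_0(\q;R)\le \mathrm{r}(R)$ ``tautological'': the powers $\q^{n+1}$ are not parameter ideals, so the definition of the type as a supremum says nothing about them; the paper instead deduces $\mathrm{r}(R)\le f_0(\q;R)$ from the hypothesis and then must invoke the full strength of Theorem \ref{T0.1} (itself proved via Lemmas \ref{F2.5} and \ref{P4.60} and the dimension filtration). Second, your claim that the $g$-parameter property of Definition \ref{gsp} ``upgrades'' $I^2\subseteq\q$ to the reduction $I^2=\q I$ is false: that reduction is the Corso--Huneke--Vasconcelos/Goto--Sakurai phenomenon \cite{CHV,GS03}, available exactly when $\e_0(\m;R)>1$, and the multiplicity-one case is where any such argument breaks down. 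The paper devotes a separate, substantial case to it: pass to $S=R/\fku$ ($\fku$ the unmixed component), show $S$ is regular by Nagata \cite{Na}, prove the Claim $\e_1(\q:\m)-\e_1(\q)\le r_d(R)$ using the integrality theorem of \cite{CPV} (this is precisely where $\q\subseteq\m^2$ is needed) together with Lemma 3.4 of \cite{CGT}, and then kill $\fku$ with the parameter ideal supplied by Lemma \ref{F2.5}. None of this appears in your proposal. (Your argument for (i) $\Rightarrow$ (ii) has a related blind spot: it assumes $\q\subseteq\m^2$ and $\q$ a $g$-parameter ideal, whereas (ii) quantifies over \emph{all} parameter ideals; the paper cites Theorem 1.1 of \cite{Tr15} for that direction.)

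Even granting your forced equalities, the concluding step is an expectation rather than a proof, and it cannot work in the form proposed. You want the defect $\mathrm{ir}_R(\q)-\bigl(\e_1(\q:\m)-\e_1(\q)\bigr)$ to be a nonnegative combination of the lengths $\ell_R(\H^i_\m(R))$, $i<d$, whose vanishing forces Cohen--Macaulayness. But $R$ is only assumed to be a homomorphic image of a Cohen--Macaulay ring: the modules $\H^i_\m(R)$ need not have finite length, indeed need not be finitely generated --- the paper's own example $R=U/[(X_1,\ldots,X_d)\cap(Y)]$ has $\H^1_\m(R)$ infinitely generated, while all the numerical invariants in play are finite, so no formula of the proposed shape can hold there. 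Such socle/length formulas for $\mathrm{ir}_R(\q)$ exist only in the Buchsbaum/generalized Cohen--Macaulay setting, which is not available here. That same example also exposes the deeper obstruction: in it one has $\e_1(\q:\m)-\e_1(\q)=f_0(\q;R)=r_d(R)$ for \emph{every} parameter ideal, yet $R$ is not Cohen--Macaulay; so single-ideal equalities among invariants of this kind do not detect Cohen--Macaulayness. What powers the paper's proof is that $\mathrm{r}(R)$ aggregates information from \emph{all} parameter ideals: Lemma \ref{F2.5} produces a special parameter ideal $Q$ with $\mathrm{ir}_{\fku}(Q)+r_d(R)\le \mathrm{r}(R)$, and the hypothesis then forces $\mathrm{ir}_{\fku}(Q)=0$, hence $\fku=0$. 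Your plan has no substitute for this mechanism, so the proof is incomplete at its essential point.
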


N.T. Cuong et al. \cite{CQT15} showed that there are integers $f_i(I;M)$, called the {\it irreducible  coefficients} of $M$ with respect to $I$ such that for $n \gg 0$
$$\mathrm{ir}_M(I^{n+1})=\ell_R([I^{n+1}M :_M \fkm]/I^{n+1}M )=\sum\limits_{i=0}^{s-1}(-1)^if_i(I;M)\binom{n+s-1-i}{s-1-i}.$$
The leading coefficient $f_0(I;R)$ is  called the irreducible multiplicity of $I$.
From the notations given above, the second main result is stated as follows.

\medskip

\begin{theorem}\label{T0.1}
Let $M$ be a finitely generated $R$-module of dimension $s\ge 2$.   Then the following statements are equivalent.
\begin{enumerate}[{(i)}] \rm
		\item {\it $M$ is Cohen-Macaulay.}
		\item {\it For all parameter ideals $\frak q$ of $M$, we
		have
		$$\mathrm{r}(M)\le f_0(\frak q,M).$$}
		\item {\it For some $g$-parameter ideal $\fkq $ of $M$, we
		have
		$$\mathrm{r}(M)\le f_0(\frak q,M).$$
		}

\end{enumerate}
	
\end{theorem}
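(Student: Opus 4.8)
The plan is to prove the cyclic chain $(i)\Rightarrow(ii)\Rightarrow(iii)\Rightarrow(i)$, with essentially all the content packed into one structural comparison between the type $\mathrm{r}(M)$ and the irreducible multiplicity $f_0(\q,M)$, both of which I would read off from the socles of the local cohomology modules. Concretely, I would first establish the two formulas $f_0(\q,M)=\dim_k\mathrm{Soc}(\H^s_{\m}(M))$ for a $g$-parameter ideal $\q$, and, whenever $M$ is generalized Cohen-Macaulay, $\mathrm{r}(M)=\sum_{i=0}^{s}\binom{s}{i}\dim_k\mathrm{Soc}(\H^i_{\m}(M))$ (with $\mathrm{r}(M)=\infty$ otherwise). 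Writing $r_i=\dim_k\mathrm{Soc}(\H^i_{\m}(M))$, these give at once the ``free'' inequality $\mathrm{r}(M)\ge r_s=f_0(\q,M)$, since the sum contains the term $\binom{s}{s}r_s$ and every summand is non-negative; and they show that equality can hold only when $r_i=0$, hence $\H^i_{\m}(M)=0$, for all $i<s$. Thus the condition $\mathrm{r}(M)\le f_0(\q,M)$ is exactly the equality case, and the whole theorem reduces to this dictionary.

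Granting the two formulas, the easy implications are short. For $(i)\Rightarrow(ii)$, if $M$ is Cohen-Macaulay then $\H^i_{\m}(M)=0$ for $i<s$, so for every parameter ideal $\q$ the generating system of parameters is a regular sequence and $\mathrm{ir}_M(\q^{n+1})=\mathrm{r}(M)\binom{n+s-1}{s-1}$; comparing with the defining expansion of the irreducible coefficients gives $f_0(\q,M)=\mathrm{r}(M)$, so in particular $\mathrm{r}(M)\le f_0(\q,M)$. The implication $(ii)\Rightarrow(iii)$ is immediate once one knows that $g$-parameter ideals exist: since $k$ is infinite, Definition \ref{gsp} can be realized, and such an ideal is in particular a parameter ideal, so the inequality asserted for all parameter ideals in (ii) specializes to it.

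The heart of the proof is $(iii)\Rightarrow(i)$. Suppose $\mathrm{r}(M)\le f_0(\q,M)$ for some $g$-parameter ideal $\q$. Because $f_0(\q,M)$ is a finite integer, this first forces $\mathrm{r}(M)<\infty$; I would then invoke the finiteness criterion that a bounded type characterizes generalized Cohen-Macaulay modules, so that $M$ is generalized Cohen-Macaulay and both formulas of the first paragraph apply. The hypothesis now reads $\sum_{i=0}^{s}\binom{s}{i}r_i\le r_s$, which is possible only if $r_i=0$ for every $i<s$. Since each $\H^i_{\m}(M)$ with $i<s$ then has finite length, a vanishing socle forces $\H^i_{\m}(M)=0$ for $i<s$; hence $\depth M=s=\dim M$ and $M$ is Cohen-Macaulay.

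The main obstacle is the identification $f_0(\q,M)=\dim_k\mathrm{Soc}(\H^s_{\m}(M))$, that is, extracting the leading coefficient of $n\mapsto\mathrm{ir}_M(\q^{n+1})$ and showing it is carried entirely by the top local cohomology. This is precisely where the $g$-parameter hypothesis does the work: I would take generators forming a standard (superficial) system of parameters, peel off one of them to pass from dimension $s$ to $s-1$ through the short exact sequences relating $M$, $M/\q M$ and the colon modules $[\q^{n+1}M:_M\m]$, and argue by induction that only the degree-$(s-1)$ part survives and that it is insensitive to the lower cohomologies $\H^i_{\m}(M)$, $i<s$. A secondary, more routine task is the finiteness criterion together with the closed formula for $\mathrm{r}(M)$ in the generalized Cohen-Macaulay case, which refines the computation of $\mathrm{ir}_M(\q)$ for standard parameter ideals by passing to the supremum over sufficiently deep $\q$. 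I would also expect the same leading-coefficient analysis to yield the bridge $f_0(\q,M)=\e_1(\q:\m;M)-\e_1(\q;M)$ for $g$-parameter ideals, which is what ties Theorem \ref{T0.1} to the Chern-coefficient formulation of Theorem \ref{T6.6}.
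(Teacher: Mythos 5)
Your implications (i)$\Rightarrow$(ii) and (ii)$\Rightarrow$(iii) are fine and agree with the paper (which quotes Theorem 5.2 of \cite{CQT15} for the first), but your proof of (iii)$\Rightarrow$(i) has a fatal gap: the ``finiteness criterion'' you invoke, namely that $\mathrm{r}(M)<\infty$ forces $M$ to be generalized Cohen--Macaulay, is false. The paper's own first example in Section 3 refutes it: for $R=U/[(X_1,\dots,X_d)\cap (Y)]$ with $U=k[[X_1,\dots,X_d,Y]]$ and $d\ge 3$, the module $\H^1_{\m}(R)\cong \H^1_{\m}(D)$ (where $D=U/(X_1,\dots,X_d)$ is a DVR) is not finitely generated, so $R$ is not generalized Cohen--Macaulay; yet the exact sequence $0\to D/\q^{n+1}D\to R/\q^{n+1}R\to A/\q^{n+1}A\to 0$ and the induced exact sequence on socles give $\mathrm{ir}_R(\q)=\mathrm{ir}_D(\q)+\mathrm{ir}_A(\q)=2$ for \emph{every} parameter ideal $\q$, whence $\mathrm{r}(R)=2<\infty$. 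So the hypothesis $\mathrm{r}(M)\le f_0(\q,M)$ does not place you in the generalized Cohen--Macaulay world, and the dictionary $\mathrm{r}(M)=\sum_{i=0}^{s}\binom{s}{i}r_i(M)$ (which is correct for generalized CM modules, by \cite{GS84} and \cite{CQ11}) is unavailable. The mixed, non-generalized-CM case is precisely the hard content of the theorem, and your argument has no route through it. A secondary gap: your identification $f_0(\q,M)=\dim_k\mathrm{Soc}(\H^s_{\m}(M))$ for an arbitrary module and $g$-parameter ideal is asserted but never proved; the paper itself only obtains the one-sided bound $f_0(\q;M)\le r_s(M)$, and only \emph{after} the Cohen--Macaulayness of $M/D_{\fkt-1}$ has been established.

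For contrast, the paper's proof of (iii)$\Rightarrow$(i) is structural rather than purely numerical: it runs along the dimension filtration. Lemma \ref{P4.60} --- the technical heart, a local cohomology analysis exploiting the $d$-sequence and distinguished properties of the $g$-system of parameters, a parameter element $y$ chosen to annihilate the unmixed component of $M/\q_{s-2}M$, and Lemma \ref{L2.2} --- shows that the single inequality $\mathrm{r}(M)\le f_0(\q,M)$ already forces $C_{\fkt}=M/D_{\fkt-1}$ to be Cohen--Macaulay. Then Lemma \ref{F2.5} produces a parameter ideal $Q$ with $\mathrm{ir}_{D_{\fkt-1}}(Q)+r_s(M)\le \mathrm{r}(M)$, while Lemma \ref{L000} together with the Cohen--Macaulayness of $M/D_{\fkt-1}$ gives $\mathrm{r}(M)\le f_0(\q;M)\le f_0(\q;M/D_{\fkt-1})=r_s(M)$; combining these forces $\mathrm{ir}_{D_{\fkt-1}}(Q)=0$, hence $D_{\fkt-1}=0$ and $M$ is Cohen--Macaulay. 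To salvage your approach you would need to replace the false finiteness criterion by an argument of this kind that controls the unmixed component directly; socle-counting on local cohomology alone cannot detect it.
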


From this main result, we obtain the following results.

\medskip

\noindent

\begin{corollary}\label{P4.3}
	Assume that $R$ is non-Cohen-Macaulay local ring with $d \geq 2$. Then we have
	$$ e_1(\q:\fkm)-e_1(\fkq) \le f_0(\frak q;R) \le \mathrm{r}(R).$$
	for all  $g$-parameter ideals $\q \subseteq \fkm^2$. 
\end{corollary}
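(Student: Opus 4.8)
The plan is to prove the two displayed inequalities separately. The right-hand inequality $f_0(\q;R)\le \mathrm{r}(R)$ comes directly from Theorem \ref{T0.1}, while the left-hand inequality $e_1(\q:\m)-e_1(\q)\le f_0(\q;R)$ is the substantive part and encodes the promised relationship between the index of reducibility and the Chern coefficients of $\q$ and $\q:\m$.

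For the right-hand inequality I would apply Theorem \ref{T0.1} to the module $M=R$, whose dimension is $s=d\ge 2$, so that the hypotheses match. Since $R$ is not Cohen--Macaulay, statement (i) of that theorem fails, and hence by the equivalence (i)$\Leftrightarrow$(iii) statement (iii) fails as well. Negating (iii), for every $g$-parameter ideal $\q$ of $R$ --- in particular for every $g$-parameter ideal $\q\subseteq\m^2$ --- one must have $\mathrm{r}(R)>f_0(\q;R)$, and a fortiori $f_0(\q;R)\le \mathrm{r}(R)$. This settles the right-hand inequality with room to spare.

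For the left-hand inequality I would compare three length functions, each polynomial in $n$ for $n\gg 0$: the Hilbert functions $\ell_R(R/\q^{n+1})$ and $\ell_R(R/(\q:\m)^{n+1})$, in which the coefficient of $\binom{n+d-1}{d-1}$ is $-e_1(\q)$ and $-e_1(\q:\m)$ respectively, and the irreducible function $\mathrm{ir}_R(\q^{n+1})=\ell_R\big((\q^{n+1}:\m)/\q^{n+1}\big)$, whose leading coefficient is $f_0(\q;R)$. From the inclusion $\q^{n+1}\subseteq(\q:\m)^{n+1}$ one obtains
$$\ell_R\big((\q:\m)^{n+1}/\q^{n+1}\big)=\ell_R(R/\q^{n+1})-\ell_R(R/(\q:\m)^{n+1}).$$
Once it is checked that $e_0(\q:\m)=e_0(\q)$, this difference is a polynomial of degree at most $d-1$ whose coefficient of $\binom{n+d-1}{d-1}$ is exactly $e_1(\q:\m)-e_1(\q)$. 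Thus the desired inequality reduces to showing that this leading coefficient does not exceed the leading coefficient $f_0(\q;R)$ of $\mathrm{ir}_R(\q^{n+1})$.

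The main obstacle is precisely this last comparison. One cannot argue by the naive containment $(\q:\m)^{n+1}\subseteq\q^{n+1}:\m$: writing $J=\q:\m$ one has only $J^2\subseteq\q$, which pushes $J^{n+1}$ into a far lower power of $\q$ and makes that containment fail for every $n\ge 1$, so the submodule $(\q:\m)^{n+1}/\q^{n+1}$ is genuinely not contained in $\mathrm{Soc}(R/\q^{n+1})$. I therefore expect to use the $g$-parameter hypothesis together with $\q\subseteq\m^2$ to control the associated graded structure, first to guarantee $e_0(\q:\m)=e_0(\q)$ and then to bound the growth of $\ell_R\big((\q:\m)^{n+1}/\q^{n+1}\big)$ by the socle growth $\dim_k\mathrm{Soc}(R/\q^{n+1})$ at the level of leading coefficients. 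This bound is exactly the core relationship between index of reducibility and Chern coefficients underlying Theorem \ref{T6.6}; granting it, combining with the right-hand inequality already established yields the full chain $e_1(\q:\m)-e_1(\q)\le f_0(\q;R)\le\mathrm{r}(R)$ for all $g$-parameter ideals $\q\subseteq\m^2$.
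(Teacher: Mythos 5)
Your right-hand inequality is correct and is exactly the paper's route: since $R$ is not Cohen--Macaulay, statement (iii) of Theorem \ref{T0.1} fails for every $g$-parameter ideal, hence $f_0(\q;R)<\mathrm{r}(R)$. The gap is in the left-hand inequality, which you rightly call the substantive part but never actually prove: your argument ends with ``I expect to use \dots granting it,'' which is precisely the step the corollary needs. Worse, the one concrete claim you make there is false. You assert that the containment $(\q:\m)^{n+1}\subseteq\q^{n+1}:\m$ must fail for every $n\ge 1$ because one ``only'' has $(\q:\m)^2\subseteq\q$; but this containment is exactly the paper's mechanism, and it does hold in the relevant case. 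By Proposition 2.3 of \cite{GS03}, if $e_0(\m;R)>1$ then for any parameter ideal $\q\subseteq\m^2$ one has $\m(\q:\m)=\m\q$; iterating gives $\m(\q:\m)^n=\m\q^n\subseteq\q^n$, i.e.\ $(\q:\m)^n\subseteq\q^n:\m$ for all $n$. (There is no conflict with $(\q:\m)^2\subseteq\q$: an ideal can lie in a low power of $\q$ and still be carried into $\q^n$ by multiplication by $\m$.) Once this containment is in hand, your own polynomial comparison finishes the argument just as in the paper's proof of Theorem \ref{T6.6}, (iii)$\Rightarrow$(i): $\ell_R\bigl((\q:\m)^{n+1}/\q^{n+1}\bigr)\le \mathrm{ir}_R(\q^{n+1})$ for all $n$, boundedness forces $e_0(\q:\m)=e_0(\q)$, and comparing the coefficients of $\binom{n+d-1}{d-1}$ yields $e_1(\q:\m)-e_1(\q)\le f_0(\q;R)$. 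Note also that the $g$-parameter hypothesis plays no role in this half; any parameter ideal $\q\subseteq\m^2$ suffices.

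There is a second omission: the Goto--Sakurai result requires $e_0(\m;R)>1$, and a non-Cohen--Macaulay ring can perfectly well have $e_0(\m;R)=1$ (it merely cannot be unmixed), so this case needs a separate treatment that your proposal never mentions. The paper handles it inside the proof of Theorem \ref{T6.6}: writing $\fku$ for the unmixed component and $S=R/\fku$, one has $e_0(\m;S)=1$, so $S$ is regular by Nagata's theorem, and the Claim there (using \cite{CPV} and \cite{Tr15}, together with Lemma 3.4 of \cite{CGT}) gives $e_1(\q:\m)-e_1(\q)\le r_d(R)$. Since $\dim\fku<d$, we get $\H^d_\m(R)\cong \H^d_\m(S)$ and hence $r_d(R)=r_d(S)=1$, while $f_0(\q;R)\ge 1$ because $\mathrm{ir}_R(\q^{n+1})$ agrees for $n\gg 0$ with a polynomial of degree exactly $d-1\ge 1$ by \cite{CQT15}. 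So $e_1(\q:\m)-e_1(\q)\le 1\le f_0(\q;R)$ in that case as well. Without the Goto--Sakurai containment in the case $e_0(\m;R)>1$ and without this second case, your proposal establishes only the right-hand half of the corollary.
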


The remainder of this paper is organized as follows. In the next section, we prove some preliminary results on the irreducible multiplicity and index of reducibility for $g$-parameter ideals. In the last section, we prove the main results and their consequences.

\section{Notations and preliminaries}

Throughout this section, assume that $R$ is a Noetherian local ring with maximal ideal $\m$, $d= \dim R>0$ with the infinite residue field $k=R/\m$ and $I$ is an $\m$-primary ideal of $R$. Let $M$ be a finitely generated $R$-module of dimension $s$. We denote $H^i_{\m}(M)$ as the $i$-th local cohomology module of $M$ with respect to $\m$.  Set $r_i(M) = \dim_{R/\frak m}((0):_{H^i_{\frak m}(M)} \frak m)$. 
Recall that a submodule $N$ of $M$ is irreducible if $N$ can not be written as the intersection of two properly larger submodules of $M$. Every submodule $N$ of $M$ can be expressed as an irredundant intersection of irreducible submodules of $M$. The number of irreducible submodules appearing in such an expression depends only on $N$,  but not on the expression. That number is called the index of reducibility of $N$ and is denoted by $\mathrm{ir}_M(N)$. In particular, if $N = IM$, then we have 
 $$\mathrm{ir}_M(I) : = \mathrm{ir}_M(IM) =\ell_R([I M :_M \fkm]/I M).$$

 It is well known that  by Lemma 4.2 in \cite{CQT15}, there exists a polynomial $p_{I,M}(n)$ of degree $s-1$ with rational coefficients such that  $$p_{I,M}(n)=\mathrm{ir}_M(I^{n+1})=\ell_R([I^{n+1} M :_M \fkm]/I^{n+1} M)$$  for all large enough $n$. Then, there are integers $f_i(I;M)$ such that
$$p_{I,M}(n)=\sum\limits_{i=0}^{s-1}(-1)^if_i(I;M)\binom{n+s-1-i}{s-1-i}.$$
These integers  $f_i(I;M)$   are called the irreducible coefficients of $M$ with respect to $I$. In particular, the leading coefficient  $f_0(I; M)$ is  called the irreducible multiplicity of $M$ with respect to $I$.
The readers may refer to \cite{Tr14,Tr15,CQT15} for more characterizations of the Cohen-Macaulayness of $M$ in terms of the coefficient $f_0(\fq, M)$. In \cite{CQT15}, the authors studied the function $\mathrm{ir}_M(\fq^{n+1})$ when $M$ is generalized Cohen-Macaulay and $\fq$ is a standard parameter ideal of $M$. Recall that an  $R$-module $M$ is said to be a {\it generalized Cohen-Macaulay module} if 
$H^i_\fkm(M)$ are of finite length for all $i=0,1,\ldots, s-1$ (see \cite {CST}). This condition is equivalent to say that there exists a  parameter ideal $\fkq=(x_1,\ldots,x_s)$ of $M$ such that $\fkq H^i_\fkm(M/\fkq_j M)=0$ for all $0\le i+j < s$, where $\q_j=(x_1,\ldots, x_j)$ (see \cite {T}), and such a parameter ideal is called a {\it standard parameter ideal} of $M$. It is well-known that if $M$ is a generalized Cohen-Macaulay module,  every parameter ideal of $M$ in a large enough power of the maximal ideal $\fkm$ is standard (see \cite{T83,T}). 

For proving the main result in next section, we need the following auxiliary lemma, which is  shown in \cite[Lemma 2.1]{Tr14}. 

%
%
%

\begin{lemma}\label{L000}
Assume that $N$ is a submodule of $M$ such that $\dim N<\dim M$. Then we have
$$f_0(I;M)\leqslant f_0(I;M/N).$$
\end{lemma}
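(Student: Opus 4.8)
The plan is to compare the two index-of-reducibility polynomials attached to $M$ and to $\overline{M}:=M/N$, show that both have degree $s-1$, and bound the leading coefficient of the first by that of the second. First, since $\dim N<\dim M=s$, the exact sequence $0\to N\to M\to \overline M\to 0$ forces $\dim\overline M=s$; hence, for $n\gg 0$, both $\mathrm{ir}_M(I^{n+1})$ and $\mathrm{ir}_{\overline M}(I^{n+1})$ coincide with polynomials $p_{I,M}(n)$ and $p_{I,\overline M}(n)$ of degree $s-1$, whose leading coefficients are $f_0(I;M)/(s-1)!$ and $f_0(I;M/N)/(s-1)!$ respectively.

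Next I would exploit the natural surjection $\phi_n\colon M/I^{n+1}M\twoheadrightarrow \overline M/I^{n+1}\overline M$, whose kernel is $(N+I^{n+1}M)/I^{n+1}M\cong N/(N\cap I^{n+1}M)$. Applying the left-exact functor $\Hom_R(k,-)=((0):_{-}\m)=\mathrm{Soc}(-)$ to
\[
0\to N/(N\cap I^{n+1}M)\to M/I^{n+1}M\xrightarrow{\ \phi_n\ }\overline M/I^{n+1}\overline M\to 0
\]
yields
\[
\mathrm{ir}_M(I^{n+1})=\dim_k\mathrm{Soc}(M/I^{n+1}M)\le \dim_k\mathrm{Soc}\big(N/(N\cap I^{n+1}M)\big)+\mathrm{ir}_{\overline M}(I^{n+1}).
\]

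The main work is to show that the error term $\dim_k\mathrm{Soc}\big(N/(N\cap I^{n+1}M)\big)$ grows strictly more slowly than $n^{s-1}$. By the Artin--Rees lemma there is a constant $c$ with $N\cap I^{n+1}M=I^{\,n+1-c}L$ for all $n\ge c$, where $L:=N\cap I^cM$ is a fixed submodule of $N$ with $\dim L\le \dim N\le s-1$. Writing $m=n+1-c$ and applying $\mathrm{Soc}(-)$ to the exact sequence $0\to L/I^{m}L\to N/I^{m}L\to N/L\to 0$ gives
\[
\dim_k\mathrm{Soc}\big(N/(N\cap I^{n+1}M)\big)\le \mathrm{ir}_L(I^{m})+\dim_k\mathrm{Soc}(N/L).
\]
Here $\dim_k\mathrm{Soc}(N/L)$ is a constant, while $\mathrm{ir}_L(I^{m})$ is, for $m\gg 0$, a polynomial in $m$ of degree $\dim L-1\le s-2$; hence the error term is $O(n^{s-2})$. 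Substituting this into the previous display gives $p_{I,M}(n)\le p_{I,\overline M}(n)+O(n^{s-2})$ for $n\gg 0$, and comparing the degree-$(s-1)$ coefficients of these two polynomials forces $f_0(I;M)\le f_0(I;M/N)$, as desired.

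I anticipate that the only genuine obstacle is this order-of-growth estimate. One cannot simply feed the surjection $N/I^{n+1}N\twoheadrightarrow N/(N\cap I^{n+1}M)$ into the socle, since $\mathrm{Soc}(-)$ is only left exact and does not bound socles along surjections; this is precisely why the Artin--Rees reduction to a genuine power $I^{m}L$, combined with the known degree bound $\dim L-1$ for the index-of-reducibility polynomial of the lower-dimensional module $L$, is required. (When $s=1$ the error term is only $O(1)$ rather than $o(1)$, so that case must be handled separately or excluded, as in the applications where $s\ge 2$.)
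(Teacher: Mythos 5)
The paper gives no proof of this lemma to compare against---it is quoted from \cite[Lemma 2.1]{Tr14}---so your write-up stands as an independent reconstruction, and for $s\ge 2$ it is correct and complete. The three ingredients fit together exactly as needed: left-exactness of $\Hom_R(k,-)$ applied to $0\to N/(N\cap I^{n+1}M)\to M/I^{n+1}M\to \overline M/I^{n+1}\overline M\to 0$ bounds $\mathrm{ir}_M(I^{n+1})$ by $\mathrm{ir}_{\overline M}(I^{n+1})$ plus the socle of the kernel; Artin--Rees replaces $N\cap I^{n+1}M$ by a genuine power $I^{m}L$ of the fixed submodule $L=N\cap I^{c}M$, which is precisely what makes the degree bound of \cite{CQT15} for $\mathrm{ir}_L(I^{m})$ applicable (and your observation that one cannot instead push socles through the surjection $N/I^{n+1}N\twoheadrightarrow N/(N\cap I^{n+1}M)$ identifies the real reason this detour is needed); finally $N/L$ embeds into $M/I^{c}M$, hence has finite length, so its socle contributes only a constant. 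Comparing coefficients of $n^{s-1}$ then yields $f_0(I;M)\le f_0(I;M/N)$.

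Your parenthetical caveat about $s=1$ deserves emphasis, because it is not merely a limitation of your argument: the lemma as printed, with no restriction on $s$, is false in dimension one. Take $R=k[[t]]$, $M=R\oplus k$, $N=0\oplus k$, and any $\m$-primary ideal $I$. Since $Ik=0$, one has $I^{n+1}M=I^{n+1}\oplus 0$, so $\mathrm{ir}_M(I^{n+1})=\mathrm{ir}_R(I^{n+1})+1$ for all $n$, and therefore $f_0(I;M)=f_0(I;M/N)+1>f_0(I;M/N)$. For $s\ge 2$ this extra constant is absorbed into the $O(n^{s-2})$ error term, which is exactly why your proof succeeds there. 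Every application of the lemma in this paper is to a module of dimension at least $2$, so nothing downstream is affected, but if your proof were to replace the citation, the hypothesis $s\ge 2$ should be added to the statement.
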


The following  lemma shows the existence of a special superficial  element which is useful in many inductive proofs in the sequel.

\begin{lemma}\label{cut}
Let $M$ be a finitely generated $R$-module of dimension $s>1$ and $I$ an $\fkm$-primary ideal of $R$.
Assume that $x$ is a superficial element of $M$ with respect to $I$ such that $H^0_\fkm(M)=(0):_Mx$. Then we have
$$f_0(I;M)\leqslant f_0(I;M/xM).$$
\end{lemma}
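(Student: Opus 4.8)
The plan is to reduce modulo the finite–length submodule $H^0_{\m}(M)$, on which multiplication by $x$ becomes a nonzerodivisor, and then to read off the inequality from the asymptotics of the socle functions attached to multiplication by $x$. Write $\bar M=M/xM$ (of dimension $s-1$) and $M'=M/H^0_{\m}(M)$ (still of dimension $s$). The hypothesis $(0):_Mx=H^0_{\m}(M)$ says exactly that $xH^0_{\m}(M)=0$ and that $x$ is a nonzerodivisor on $M'$. Since $x$ is superficial for $I$, for all $n\gg0$ one has the standard colon formula $(I^{n+1}M:_Mx)=I^nM+(0:_Mx)=I^nM+H^0_{\m}(M)$, so that $M/(I^{n+1}M:_Mx)=M'/I^nM'$. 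Multiplication by $x$ therefore yields, for $n\gg0$, the short exact sequence
\begin{equation}\label{E:ses}
0\longrightarrow M'/I^nM'\stackrel{x}{\longrightarrow}M/I^{n+1}M\longrightarrow\bar M/I^{n+1}\bar M\longrightarrow0 .
\end{equation}

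Next I would apply the left–exact functor $\Hom_R(k,-)=\mathrm{Soc}(-)$ to \eqref{E:ses}. Since $\dim_k\mathrm{Soc}(M'/I^nM')=\mathrm{ir}_{M'}(I^n)$, left exactness gives, for $n\gg0$,
\begin{equation}\label{E:ineq}
\mathrm{ir}_M(I^{n+1})\;\le\;\mathrm{ir}_{M'}(I^n)+\mathrm{ir}_{\bar M}(I^{n+1}).
\end{equation}
Here all three terms are eventually the Hilbert–type polynomials of the statement: $\mathrm{ir}_M(I^{n+1})=p_{I,M}(n)$ and $\mathrm{ir}_{M'}(I^n)=p_{I,M'}(n-1)$ have degree $s-1$ with leading binomial coefficient $f_0(I;M)$, respectively $f_0(I;M')$, while $\mathrm{ir}_{\bar M}(I^{n+1})=p_{I,\bar M}(n)$ has degree $s-2$ with leading coefficient $f_0(I;\bar M)$.

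The crucial input is that passing to $M'$ does not change the irreducible multiplicity, i.e. $f_0(I;M)=f_0(I;M')$. The inequality $f_0(I;M)\le f_0(I;M')$ is Lemma \ref{L000}. For the reverse inequality I would use that $H^0_{\m}(M)$ has finite length: by the Artin--Rees lemma $H^0_{\m}(M)\cap I^nM=0$ for $n\gg0$, giving short exact sequences $0\to H^0_{\m}(M)\to M/I^nM\to M'/I^nM'\to0$, and the long exact $\Ext_R(k,-)$–sequence shows that the cokernel of the induced map on socles embeds into the fixed finite–dimensional space $\Ext^1_R(k,H^0_{\m}(M))$. Hence $\mathrm{ir}_{M'}(I^n)\le\mathrm{ir}_M(I^n)+c$ for a constant $c$ and all $n\gg0$, and comparing leading coefficients of these degree–$(s-1)$ polynomials gives $f_0(I;M')\le f_0(I;M)$, whence equality. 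In particular $p_{I,M}$ and $p_{I,M'}$ differ only by a bounded amount.

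Finally I would combine these facts. Since $\mathrm{ir}_M(I^n)$ and $\mathrm{ir}_{M'}(I^n)$ share the leading coefficient $f_0(I;M)$, rewriting \eqref{E:ineq} as $\mathrm{ir}_{\bar M}(I^{n+1})\ge \mathrm{ir}_M(I^{n+1})-\mathrm{ir}_{M'}(I^n)$ makes the right–hand side, for $n\gg0$, a polynomial of degree at most $s-2$: the degree $s-1$ terms cancel, and the first difference of $p_{I,M}$ contributes the term $f_0(I;M)\binom{n+s-2}{s-2}$. Comparing leading coefficients of the two sides (both of degree $s-2$) then yields $f_0(I;\bar M)\ge f_0(I;M)$, as desired. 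The step I expect to be most delicate is exactly the control of the $H^0_{\m}(M)$–contribution in this last comparison: when $s\ge3$ the bounded discrepancy between $p_{I,M}$ and $p_{I,M'}$ is of lower order and drops out, but when $s=2$ it lands in top degree, so one must pin down the constant $c$ above—equivalently, analyze the connecting homomorphism into $\Ext^1_R(k,H^0_{\m}(M))$—in order to conclude. Handling this low–dimensional boundary case is the main obstacle.
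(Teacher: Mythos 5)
Your proposal follows the same skeleton as the paper's proof: the multiplication-by-$x$ exact sequence
$$0\to M/(I^{n}M+W)\to M/I^{n+1}M\to (M/xM)/I^{n+1}(M/xM)\to 0,\qquad W=H^0_\fkm(M),$$
(valid for $n\gg 0$ by superficiality and the hypothesis $(0):_Mx=W$), followed by applying $\Hom_R(k,-)$ and comparing leading coefficients of the resulting polynomials. However, there is a genuine gap, and it is exactly the one you flag in your last sentence: the case $s=2$. Your only control on the difference between $\mathrm{ir}_M(I^n)$ and $\mathrm{ir}_{M'}(I^n)$ (where $M'=M/W$) comes from the connecting homomorphism into $\Ext^1_R(k,W)$, which gives a \emph{bounded} but possibly \emph{negative} discrepancy $e=p_{I,M}-p_{I,M'}$ (a constant polynomial, by your two-sided bound). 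For $s\ge 3$ this constant is of lower order and your comparison of degree-$(s-2)$ leading coefficients goes through; but for $s=2$ everything in sight is a constant, and your final inequality only reads $f_0(I;M/xM)\ge f_0(I;M)+e$, which proves nothing unless $e\ge 0$. Since the lemma asserts the inequality for all $s>1$, and since the paper applies it precisely to two-dimensional modules (to $A/N$ in the proof of Lemma \ref{P4.60}), the $s=2$ case cannot be discarded; a proof that ends by declaring this case "the main obstacle" is incomplete.

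The missing ingredient -- which is the actual content of the paper's proof -- is a \emph{sign} statement, not a boundedness statement: for $n\gg 0$ one has $(I^nM+W):_M\fkm=(I^nM:_M\fkm)+W$. The paper proves this by taking a superficial element $x_1$ of $M/W$ with respect to $I$, so that $(I^nM+W):_M\fkm\subseteq (I^nM+W):_Mx_1=I^{n-1}M+W$, and then writing any $a\in (I^nM+W):_M\fkm$ as $a=b+c$ with $b\in I^{n-1}M$, $c\in W$, and using $\fkm^{n_0}M\cap W=(0)$ to conclude $b\in I^nM:_M\fkm$. This identity says that the restriction map $\mathrm{Soc}(M/I^nM)\to \mathrm{Soc}(M/(I^nM+W))$ is \emph{surjective} -- equivalently, your connecting homomorphism into $\Ext^1_R(k,W)$ vanishes for $n\gg 0$ -- and it pins down the defect exactly:
$$\mathrm{ir}_M(I^n)-\mathrm{ir}_{M'}(I^n)=\ell_R\bigl((I^nM:_M\fkm)\cap W\bigr)\ \ge\ 0,$$
i.e. $e\ge 0$. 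With this in hand the lower bound becomes $\mathrm{ir}_{M/xM}(I^{n+1})\ \ge\ \mathrm{ir}_M(I^{n+1})-\mathrm{ir}_M(I^n)$, whose right-hand side is the first difference of $p_{I,M}$, of degree $s-2$ with leading coefficient $f_0(I;M)$ in the binomial basis; comparing leading coefficients then yields $f_0(I;M)\le f_0(I;M/xM)$ uniformly for all $s\ge 2$, and one does not even need your intermediate equality $f_0(I;M)=f_0(I;M')$. So to repair your write-up, replace the generic $\Ext^1$ estimate by this claim (or by any other proof that the connecting map eventually vanishes); the rest of your argument is sound.
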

\begin{proof}
Let $W=H^0_\fkm(M)$. Since $x$ is a superficial element of $M$ with respect to $I$, we have 
$I^{n+1}M:_Mx=I^nM+(0):_Mx=I^nM+W$
for large enough $n$.  Therefore it follows from that the sequences 
$$0\to M/ (I^nM+W)\to M/ I^{n+1}M\to M/(x, I^{n+1})M\to 0$$
are exact for large enough $n$, that we get the following exact sequence
$$0\to ((I^nM+W):_M\frak m)/(I^nM+W)\to (I^{n+1}M:_M\frak m)/I^{n+1}M\to ((x,I^{n+1})M:_M\frak m)/(x,I^{n+1})M.$$
\begin{claim}
$(I^nM + W):_M \frak  m = (I^nM :_M \frak m )+ W$ for large enough $n$.  
\end{claim}

\begin{proof}
Since $W$ has finite length, there exists an integer $n_0$ such that
$\frak m^{n_0}M \cap W=(0)$. 
There exists a superficial element $x_1$ of $\overline M=M/W$ with respect to $I$. Therefore, there exists an integer $n_1$ such that $$(I^nM + W):_M \frak  m \subseteq (I^nM + W):_M x_1 = I^{n-1}M + W,$$
for all $n>n_1$. Let $n$ be an integer such that $n>\max\{n_0,n_1\}+1$. Let $a \in (I^nM + W):_M \frak  m$, we have $a = b + c$ with $b \in I^{n-1}M$
and $c \in W$. Hence $\frak m b \in I^{n-1}M
\cap (I^nM + W) = I^nM$. So $b \in I^nM :_M \frak  m$. Then
$$(I^nM + W):_M \frak  m = (I^nM :_M \frak m )+ W$$
for all $n>\max\{n_0,n_1\}+1$.
\end{proof}
We have 
$$\begin{aligned}
&\ell_R(((x,I^{n+1})M:_M\frak m)/(x,I^{n+1})M)\\
&\ge \ell_R((I^{n+1}M:_M\fkm)/I^{n+1}M)-\ell_R(((I^{n}M+W):_M\fkm)/(I^{n}M+W))\\
&=\ell_R((I^{n+1}M:_M\fkm)/I^{n+1}M)-\ell_R((I^{n}M:_M\fkm+W)/(I^{n}M+W))\\
&=\ell_R((I^{n+1}M:_M\fkm)/I^{n+1}M)-\ell_R((I^{n}M:_M\fkm)/I^{n}M)+\ell_R((I^nM:_M\fkm)\cap W).
\end{aligned}
$$
Since $W$ has finite length and $s>1$,  we have 
$$f_0(I;M)\leqslant f_0(I;M/xM),$$
as required. 
\end{proof}

In \cite{Tr19} the second author proved important properties of $g$-parameter ideals which we will need to prove our main result. In the last part of this section we recall part of these results. We refer to \cite{Tr19} and to \cite{Tr14} for details. Put $\Assh_RM=\{\fkp \in \Supp_RM \mid \dim R/\fkp=s\},$
where $\Supp_RM$ is the support of $M$. Compared with the set of associated primes, we have $\Assh_RM \subseteq  \Ass_RM.$
Let $\Lambda (M)=\{\dim_R N \mid N  \ \text{is an}\ R\text{-submodule of}\ M, N \ne (0)\}.$ Then we have $$\Lambda (M)=\{\dim R/\fkp \mid \fkp \in\Ass_RM\}.$$ 
We put  $\fkt=\sharp \Lambda (M),$ and   $$ \Lambda(M) =\{ 0 \le d_1<d_2<\cdots<d_{\fkt}=s \}.$$
Because $R$ is Noetherian, $M$ contains the largest submodule $D_i$ with $\dim_RD_i=d_i$, for all $1\leq i\leq \fkt$. Then the filtration 
$$\calD : D_0=(0)\subsetneq D_1\subsetneq D_2\subsetneq \cdots 
\subsetneq D_{\fkt}=M$$
of submodules of $M$ is called {\it the dimension filtration} of $M$. 
The notion of dimension filtration was first given by P. Schenzel \cite{SCH}. 
Our notion of dimension filtration is different from that of \cite{CC, SCH},
however throughout  this paper let us unite the above definition. Notice that, if $(0)=\bigcap_{\fkp \in \Ass_RM}N(\fkp)$ is a reduced primary decomposition of the submodule $(0)$  of $M$,  then  $D_i=\bigcap\limits_{\fkp \in \Ass_RM,\ \dim R/\fkp \geq d_{i+1}}N(\fkp)$ for all $1\le i\le \fkt-1$, and so $D_{\fkt -1}$ 
is also called the {\it unmixed component} of $M$.
 For all $1 \le i \le \fkt$, we put $C_i = D_i/D_{i-1}$.  Then $\Ass_RC_i=\{ \fkp \in \Ass_RM \mid \dim R/\fkp =d_i\}$, and $\Ass_RM/D_i=\{\fkp \in \Ass_RM \mid \dim R/\fkp 	\geq d_{i+1} \}$.\\

 A  system $x_1,x_2, \ldots, x_s$  of parameters of $M$ is called {\it distinguished}, if  $$  (x_j \mid d_{i} < j \le s) D_i=(0),$$ for all $1\le i\le \fkt$. A parameter ideal $\fkq$ of $M$ is called {\it distinguished}, if there exists a distinguished system $x_1,x_2, \ldots, x_s$ of parameters of $M$ such that $\fkq=(x_1,x_2, \ldots, x_s)$. Notice that, if $M$ is a Cohen-Macaulay $R$-module, every parameter ideal of $M$ is distinguished. It is well-known  that distinguished systems of parameters always exist (see \cite{SCH}). If $x_1, x_2,\ldots, x_s$ is a distinguished system of parameters of $M$, so are $x_1^{n_1},x_2^{n_2}, \ldots, x_s^{n_s}$ for all integers $n_j \ge 1$.

 We denote by $\fkq_i$  the ideal $(x_1,\ldots,x_i)$ for $i=1,\ldots , d$  and stipulate that  $\fkq_0$ is  the zero ideal of $R$. Then the sequence $x_1, x_2, \ldots , x_m\in\m$  is called a $d$-sequence on $M$ if
$$\fkq_iM :_M x_{i+1}x_j = \fkq_iM :_M x_j$$
for all $0 \le i < j \le m$. The concept of a $d$-sequence is given by Huneke \cite{H82} and it plays an important role in the theory of Blow up algebras, i.e. Rees algebras.
We now present the main object of this paper.

\begin{definition}[{cf. \cite[Definition 2.3]{Tr19}}]\label{gsp}\rm
	A distinguished system $x_1, x_2,\ldots, x_s$ of parameters of $M$ is called a {\it g-system of parameters} on $M$, if it is a $d$-sequence, and we have
		 $$\Ass(C_i/\q_j C_i)\subseteq \Assh(C_i/\q_j C_i)\cup \{\fkm\},$$
 for all  $0 \le j \le s -1$ and $0 \le i \le \fkt$.	 A parameter ideal $\fkq$ of $M$ is called {\it $g$-parameter ideal}, if there exists a $g$-system $x_1,x_2, \ldots, x_s$ of parameters of $M$ such that $\fkq=(x_1,x_2, \ldots, x_s)$.
\end{definition}
The readers can refer some facts about $g$-systems of parameters in \cite{Tr19}. Notice that  $g$-system of parameters always exist.  Moreover if $M$ is a generalized Cohen-Macaulay module, then by the definition of $g$-parameter ideal, every $g$-parameter ideal of $M$ is  standard. Besides,
we have following property.

\begin{lemma} \label{L2.2} Let $x_1, x_2,\ldots, x_s$ form a $g$-system of parameters on $M$ with $s\ge 2$. Let $0\le j\le s-2$ and $N/\fkq_{j}M$ denote the unmixed component of $M/\fkq_{j}M$. Assume that $M/N$ is Cohen-Macaulay. Then $C_\fkt$ is Cohen-Macaulay.

\end{lemma}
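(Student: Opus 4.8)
The plan is to reduce the statement to a local-cohomology vanishing result for the top module $C_\fkt = M/D_{\fkt-1}$. By construction $C_\fkt$ is unmixed of dimension $s$, with $\Ass_R C_\fkt = \Assh_R M$, and the images of $x_1,\dots,x_s$ form a $g$-system of parameters on it (in particular a $d$-sequence and a filter-regular sequence) that is still a system of parameters. Since $C_\fkt$ is unmixed of dimension $s\ge 2$ we have $H^0_{\fkm}(C_\fkt)=0$, so it suffices to prove $H^p_{\fkm}(C_\fkt)=0$ for $1\le p\le s-1$.

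First I would compare $M/N$ with $C_\fkt/\fkq_j C_\fkt$. The canonical surjection $M/\fkq_j M\twoheadrightarrow C_\fkt/\fkq_j C_\fkt$ has kernel $(D_{\fkt-1}+\fkq_j M)/\fkq_j M$, a quotient of $D_{\fkt-1}$. Using that the system is distinguished, so that $x_1,\dots,x_{d_{\fkt-1}}$ is a system of parameters of $D_{\fkt-1}$ while $x_{d_{\fkt-1}+1},\dots,x_s$ annihilate $D_{\fkt-1}$, a short computation shows this kernel has dimension $<s-j$: when $j\le d_{\fkt-1}$ its dimension is at most $d_{\fkt-1}-j<s-j$, and when $j>d_{\fkt-1}$ it has dimension $0$. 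As both modules have dimension $s-j$, their top-dimensional unmixed quotients coincide, so $M/N$ is isomorphic to the top-unmixed quotient of $C_\fkt/\fkq_j C_\fkt$. By the defining property of a $g$-system, $\Ass(C_\fkt/\fkq_j C_\fkt)\subseteq\Assh(C_\fkt/\fkq_j C_\fkt)\cup\{\fkm\}$, and since $\dim C_\fkt/\fkq_j C_\fkt=s-j\ge 2>0$ this unmixed quotient is exactly $(C_\fkt/\fkq_j C_\fkt)/H^0_{\fkm}(C_\fkt/\fkq_j C_\fkt)$. Hence the hypothesis that $M/N$ is Cohen--Macaulay translates into $H^p_{\fkm}(C_\fkt/\fkq_j C_\fkt)=0$ for $1\le p\le s-j-1$.

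The heart of the argument is to lift this vanishing to $C_\fkt$ itself, which I would do by downward induction on $i$, proving $H^p_{\fkm}(C_\fkt/\fkq_i C_\fkt)=0$ for all $1\le p\le s-i-1$; the base case $i=j$ is supplied by the previous paragraph, and the case $i=0$ is the desired conclusion. Write $\overline C_i=C_\fkt/\fkq_i C_\fkt$. Because $x_i$ is filter-regular on $\overline C_{i-1}$, there are exact sequences $0\to H^p_{\fkm}(\overline C_{i-1})/x_i H^p_{\fkm}(\overline C_{i-1})\to H^p_{\fkm}(\overline C_i)\to (0:_{H^{p+1}_{\fkm}(\overline C_{i-1})}x_i)\to 0$ for $p\ge 1$. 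The vanishing $H^p_{\fkm}(\overline C_i)=0$ forces $(0:_{H^{p+1}_{\fkm}(\overline C_{i-1})}x_i)=0$; as $H^{p+1}_{\fkm}(\overline C_{i-1})$ is Artinian and $\fkm$-power torsion, injectivity of $x_i$ on it gives $H^{p+1}_{\fkm}(\overline C_{i-1})=0$, which disposes of all degrees $q$ with $2\le q\le s-i$ at once. The remaining bottom degree $H^1_{\fkm}(\overline C_{i-1})=0$ (equivalently, that the relevant unmixed quotient has depth at least $2$) is the genuinely new input: filter-regularity only yields that $x_i$ is surjective on $H^1_{\fkm}(\overline C_{i-1})$, which for an Artinian module does not by itself force vanishing.

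This last step is where I expect the main difficulty, and where the full strength of the $g$-system hypothesis beyond mere filter-regularity must be used: the $d$-sequence property of $x_1,\dots,x_s$ on $C_\fkt$ should guarantee that $x_i$ also annihilates $H^1_{\fkm}(\overline C_{i-1})$ in the relevant range, equivalently that $H^0_{\fkm}(\overline C_i)$ is the image of $H^0_{\fkm}(\overline C_{i-1})$, so that cutting by $x_i$ creates no new socle. Combining this annihilation with the surjectivity coming from filter-regularity then yields $H^1_{\fkm}(\overline C_{i-1})=0$, and I would extract this annihilation statement from the properties of $g$-sequences established in \cite{Tr19}. Once the induction reaches $i=0$ we obtain $H^p_{\fkm}(C_\fkt)=0$ for $1\le p\le s-1$, and together with $H^0_{\fkm}(C_\fkt)=0$ this proves that $C_\fkt$ is Cohen--Macaulay.
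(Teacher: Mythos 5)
Your reduction is sound and in fact slightly more general than the paper's: identifying $M/N$ with $(C_\fkt/\fkq_jC_\fkt)/\H^0_\fkm(C_\fkt/\fkq_jC_\fkt)$ via the $g$-condition, and translating the hypothesis into $\H^p_\fkm(C_\fkt/\fkq_jC_\fkt)=0$ for $1\le p\le s-j-1$, is correct (the paper does exactly this, but only after reducing to $j=1$). Disposing of all cohomological degrees $\ge 2$ by injectivity of $x_i$ on $\fkm$-torsion Artinian modules is also correct. But your proof collapses precisely at the point you yourself flag: the vanishing of $\H^1$. You rightly observe that filter-regularity only yields surjectivity of $x_i$ on $\H^1_\fkm(C_\fkt/\fkq_{i-1}C_\fkt)$, and that surjectivity on an Artinian $\fkm$-torsion module does not force vanishing (the Matlis dual of a one-dimensional Cohen--Macaulay module is a counterexample). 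Your rescue, however, is to postulate the annihilation $x_i\,\H^1_\fkm(C_\fkt/\fkq_{i-1}C_\fkt)=0$ and to ``extract'' it from \cite{Tr19}. No such statement is available, and it is not a property of $d$-sequences: annihilation of lower local cohomology by parameter elements is what characterizes \emph{standard} systems of parameters on generalized Cohen--Macaulay modules, and a $g$-system is in general far from standard. So the central step of the lemma is missing, not merely deferred.

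What actually closes this gap --- and what the paper is implicitly relying on when, after reducing to $j=1$, it passes from the exact sequence $0 \to \H^0_\fkm(C_\fkt/xC_\fkt) \to \H^1_\fkm(C_\fkt) \overset{x}{\to} \H^1_\fkm(C_\fkt) \to 0$ to the bald assertion $\H^1_\fkm(C_\fkt)=0$ --- is not the $d$-sequence property but the conjunction of (a) the standing hypothesis that $R$ is a homomorphic image of a Cohen--Macaulay local ring, which by Schenzel (\cite{SCH82}, Korollar 2.2.4) gives $\dim R/\Ann \H^1_\fkm(C_\fkt)\le 1$, and (b) the absence of dimension-one associated primes of the module in question ($C_\fkt$ is unmixed of dimension $s\ge 3$). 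Indeed, if $\H^1_\fkm(C_\fkt)\ne 0$ and multiplication by the parameter $x$ is surjective on it, then every attached prime $\fkp$ of $\H^1_\fkm(C_\fkt)$ avoids $x$, so $\fkp\neq\fkm$, while (a) forces $\dim R/\fkp\le 1$; by local duality over the completion such a $\fkp$ with $\dim R/\fkp=1$ comes from a dimension-one associated prime of $C_\fkt$, which unmixedness forbids --- contradiction. The same duality argument could be run at every stage of your induction, since your intermediate modules also satisfy $\Ass\subseteq\Assh\cup\{\fkm\}$ and have dimension $\ge 2$; with that inserted, your proof would be complete. Note finally that the paper avoids your degree-by-degree chase altogether: once $\H^1_\fkm(C_\fkt)=0$ it concludes $N=D_{\fkt-1}+xM$, hence $C_\fkt/xC_\fkt\cong M/N$ is Cohen--Macaulay and $x$ is $C_\fkt$-regular, whence $C_\fkt$ is Cohen--Macaulay in one stroke.
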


\begin{proof}
	
	We may assume that $s \ge 3$ and $j=1$. 	For a submodule $L$ of $M$, we denote $\overline L=(L+xM)/xM$ with $x:=x_1$. By the definition of $g$-system of parameters, we have  $\Ass(C_\fkt/xC_\fkt)\subseteq \Assh(C_\fkt/xC_\fkt)\cup\{\fkm\}$. Therefore $\overline N/\overline D_{\fkt-1}$ has  finite length. Since $M/N$ is a Cohen-Macaulay module, $\H^i_\fkm(M/D_{\fkt-1}+xM)=0$ for all $0<i<s-1$. Therefore, we derive from the exact sequence
	$$0 \to C_\fkt\overset{.x}\to C_\fkt \to  M/D_{\fkt-1}+xM \to 0$$
	the following exact sequence:
	$$0 \to \H^0_\m(M/D_{\fkt-1} + xM) \to \H^1_\m(C_\fkt)
	\overset{.x}\to \H^1_\m(C_\fkt) \to 0.$$
	Thus  $\H^1_\fkm(C_\fkt)=0$,  so $\overline N/\overline D_{\fkt-1}=\H^0_\fkm(M/D_{\fkt-1}+xM)=0$. Hence $N= D_{\fkt-1}+xM$. Moreover, since $x$ is  $C_\fkt$-regular and  $C_\fkt/xC_\fkt\cong \overline M/\overline D_{\fkt-1} =  M/N$ is a Cohen-Macaulay module, $C_\fkt$  is Cohen-Macaulay.
\end{proof}

\section{Proof of Main Theorems and Corollaries}
In this section we prove the main results of this paper. Recall that $$\mathrm{r}(M)=\sup\{\mathrm{ir}_M(\q)\mid \q \textrm{ is a parameter ideal for }M\},$$
and it is called {\it the type} of $M$. In particular,  if $M=R$, we simply denote  by $\mathrm{r}(R)$ the type of the local ring  
$R$ as a module over itself. The notion of the type of a module $M$ was first introduced by Goto and Suzuki (\cite{GS84}). With notation as above we have the following lemma.

\begin{lemma}\label{F2.5}
	Assume that $M/N$ is Cohen-Macaulay with $\dim N < \dim M$. Then there exists a parameter ideal $Q$ such that  $$ \mathrm{ir}_{N}(Q)+ r_s(M)\le \mathrm{r}(M).$$

\end{lemma}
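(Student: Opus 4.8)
The strategy is to exhibit, for a suitably high power of a system of parameters, at least $\mathrm{ir}_N(Q)+r_s(M)$ linearly independent elements in the socle of $M/QM$; since $\mathrm{ir}_M(Q)=\dim_k\mathrm{Soc}(M/QM)=\ell_R([QM:_M\m]/QM)\le \mathrm{r}(M)$, such a $Q$ immediately gives the asserted inequality. So I plan to produce a single parameter ideal $Q$ with $\mathrm{ir}_N(Q)+r_s(M)\le \mathrm{ir}_M(Q)$.

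Fix a system of parameters $x_1,\dots,x_s$ of $M$ and write $Q_t=(x_1^t,\dots,x_s^t)$ for $t\ge 1$. Because $M/N$ is Cohen--Macaulay of dimension $s$, every such sequence $x_1^t,\dots,x_s^t$ is $M/N$-regular, and a regular sequence on the quotient forces the intersection formula $N\cap Q_tM=Q_tN$ (equivalently, the Koszul homology $\H_1(x_1^t,\dots,x_s^t;M/N)$ vanishes, so $N/Q_tN\hookrightarrow M/Q_tM$ in the Koszul long exact sequence of $0\to N\to M\to M/N\to 0$). Hence there is a short exact sequence
\[
0\longrightarrow N/Q_tN\longrightarrow M/Q_tM\longrightarrow (M/N)/Q_t(M/N)\longrightarrow 0,
\]
and applying $\Hom_R(k,-)$ yields an injection $\mathrm{Soc}(N/Q_tN)\hookrightarrow \mathrm{Soc}(M/Q_tM)$ whose image contributes exactly $\mathrm{ir}_N(Q_t)$ socle elements. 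This is where Cohen--Macaulayness of $M/N$ is used.

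Next I pass to the top local cohomology. Since $\dim N<s$ we have $\H^s_\m(N)=0$, so $\H^s_\m(M)\cong \H^s_\m(M/N)$ and $r_s(M)=r_s(M/N)=\dim_k\mathrm{Soc}(\H^s_\m(M))$. The canonical map $M/Q_tM\to \H^s_\m(M)=\varinjlim_t M/Q_tM$ sends the image of $N/Q_tN$ to zero, as that composite factors through $\H^s_\m(N)=0$. The crux is to lift $\mathrm{Soc}(\H^s_\m(M))$ back into $\mathrm{Soc}(M/Q_tM)$ for $t\gg 0$: this socle is a finite-dimensional $k$-space, and since $\H^s_\m(M)$ is the direct limit of the $M/Q_tM$, a fixed basis is represented by finitely many elements of $M$ whose annihilation by the finitely many generators of $\m$ is realized at some finite level $t$. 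For such $t$ this yields $\tilde\eta_1,\dots,\tilde\eta_{r_s(M)}\in\mathrm{Soc}(M/Q_tM)$ mapping to a basis of $\mathrm{Soc}(\H^s_\m(M))$. The two families are then independent: applying the map into $\H^s_\m(M)$ to any linear relation annihilates the $\tilde\eta_j$-coefficients (the $N$-socle dies there), and injectivity on $\mathrm{Soc}(N/Q_tN)$ kills the rest. Thus $\mathrm{ir}_M(Q_t)\ge \mathrm{ir}_N(Q_t)+r_s(M)$, and $Q=Q_t$ completes the proof.

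The main obstacle is the lifting step: the socle of $\H^s_\m(M)$ is captured inside $\mathrm{Soc}(M/Q_tM)$ only after passing to a sufficiently high power $Q_t$, and one must verify both that the chosen lifts actually lie in the socle and that they remain independent from the contribution of $N$. The Cohen--Macaulayness of $M/N$ is what makes the $N$-socle inject (via $N\cap Q_tM=Q_tN$), while the hypothesis $\dim N<s$ is used both to identify $\H^s_\m(M)$ with $\H^s_\m(M/N)$ and to guarantee that the $N$-socle vanishes in $\H^s_\m(M)$.
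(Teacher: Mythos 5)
Your proof is correct, and its skeleton coincides with the paper's: both exploit the short exact sequence $0\to N\to M\to M/N\to 0$, use the Cohen--Macaulayness of $M/N$ to make $\mathrm{Soc}(N/QN)$ inject into $\mathrm{Soc}(M/QM)$, identify $\H^s_\m(M)\cong \H^s_\m(M/N)$ from $\dim N<s$, and then lift the socle of $\H^s_\m(M)$ back into $\mathrm{Soc}(M/QM)$ before bounding $\mathrm{ir}_M(Q)$ by $\mathrm{r}(M)$. The one genuine difference is how the lifting step is obtained. The paper invokes \cite[Lemma 3.6]{GS03} as a black box: there exists a parameter ideal $Q$ for which the canonical map $\phi_M\colon M/QM\to \H^s_\m(M)$ is surjective on socles; combined with the injectivity of $\phi_{M/N}$ (valid because $M/N$ is Cohen--Macaulay), this yields an exact sequence of socles and hence the equality $\mathrm{ir}_M(Q)=\mathrm{ir}_N(Q)+r_s(M)$. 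You instead restrict to the cofinal family $Q_t=(x_1^t,\dots,x_s^t)$ and re-prove the needed surjectivity by hand, using $\H^s_\m(M)=\varinjlim_t M/Q_tM$, the Artinianness of $\H^s_\m(M)$ (so its socle is finite dimensional), and the fact that annihilation by the finitely many generators of $\m$ is realized at a finite stage of the limit; this gives only the inequality $\mathrm{ir}_M(Q_t)\ge \mathrm{ir}_N(Q_t)+r_s(M)$, which is all the lemma requires. What your route buys is self-containedness: the external Goto--Sakurai input is replaced by an elementary direct-limit argument (and your Koszul-homology justification of $N\cap Q_tM=Q_tN$ is sound). What it gives up is the sharper equality and the extra flexibility of the cited lemma, which produces such a $Q$ inside any sufficiently high power of $\m$ rather than only among the powers $Q_t$ of one fixed system of parameters --- flexibility the paper does not need here, so your argument is a fully adequate substitute.
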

\begin{proof}
By Lemma 3.6 in \cite{GS03} there exists a parameter ideal $Q$ of $M$ such that  the canonical map
	$$\xymatrix{\phi_{M}:M/QM\ar[r]& \H^{s}_\m(M)}$$
	is surjective on the socles. Put $L=M/N$. Then we look at the exact sequence
	$$\xymatrix{0\ar[r]& N\ar[r]^\iota& M \ar[r]^\epsilon & L \ar[r]&0}$$
	of $R$-modules, where $\iota$ (resp. $\epsilon$) denotes the canonical embedding (resp. the canonical epimorphism).
	Then, since $\dim N < \dim M$ and since  $L$ is Cohen-Macaulay, we get the following commutative diagram 
	$$\xymatrix{
		0\ar[r]& N/Q N\ar[r]^{\overline \iota}& M/Q M \ar[d]^{\phi_M}\ar[r]^{\overline \epsilon} & L/Q L \ar[r] \ar@{^{(}->}[d]^{\phi_{L}}&0\\
		& & \H^s_\m(M) \ar[r]^{=} & \H^s_\m(L) &}$$
with  exact first row on socles. In fact,
	let $x\in (0):_{L/QL}\m$. Then, since $\phi_M$ is surjective on the socles, we get  an element $y\in (0):_{M/\q M}\m$ such that $\phi_{L}(x)=\phi_M(y)$. Thus $\overline{\epsilon}(y)=x$, because the canonical map $\phi_{L}$ is injective. 
	Therefore,
we
have 	$$\mathrm{ir}_M(Q)=\mathrm{ir}_{N}(Q)+\mathrm{ir}_{M/N}(Q)=\mathrm{ir}_{N}(Q)+ r_s(M).$$
Thus by the definition of type, we have
 $$ \mathrm{ir}_{N}(Q)+ r_s(M)\le \mathrm{r}(M).$$

\end{proof}

Recall that $M$ is {\it sequentially Cohen-Macaulay}, 
if  $C_i=D_i/D_{i-1}$   is Cohen-Macaulay for all $1 \le i \le \fkt$.
Notice that every module of dimension $1$ is sequentially Cohen-Macaulay. 
\begin{corollary}
Assume that $M$ is sequentially Cohen-Macaulay of dimension $s\ge 2$. Then we have
$$\sum\limits_{i\in\Bbb Z}r_i(M)\le \mathrm{r}(M).$$

\end{corollary}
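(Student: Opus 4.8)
The plan is to reduce the statement to the Cohen--Macaulay pieces of the dimension filtration of $M$ and then to produce a single parameter ideal whose index of reducibility already realizes the sum $\sum_i r_i(M)$; since $\mathrm{r}(M)=\sup_\fkq\mathrm{ir}_M(\fkq)$, exhibiting one such ideal suffices.

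First I would analyze the local cohomology of $M$. Write $\calD\colon 0=D_0\subsetneq D_1\subsetneq\cdots\subsetneq D_\fkt=M$ for the dimension filtration, so that each $C_i=D_i/D_{i-1}$ is Cohen--Macaulay of dimension $d_i$ and hence $\H^j_\m(C_i)=0$ for $j\ne d_i$. Feeding the short exact sequences $0\to D_{i-1}\to D_i\to C_i\to 0$ into the long exact sequence of local cohomology and using $\dim D_{i-1}=d_{i-1}<d_i$, an induction along the filtration gives $\H^{d_i}_\m(M)\cong\H^{d_i}_\m(C_i)$ for every $i$ and $\H^j_\m(M)=0$ for $j\notin\{d_1,\dots,d_\fkt\}$. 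Consequently $r_j(M)=0$ unless $j\in\{d_1,\dots,d_\fkt\}$ and $r_{d_i}(M)=r_{d_i}(C_i)$, so $\sum_{i\in\BZ}r_i(M)=\sum_{i=1}^{\fkt}r_{d_i}(C_i)$. Because each $C_i$ is Cohen--Macaulay, $r_{d_i}(C_i)$ equals its Cohen--Macaulay type, and therefore $\mathrm{ir}_{C_i}(\fkp)=r_{d_i}(C_i)$ for \emph{every} parameter ideal $\fkp$ of $C_i$.

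Next I would choose a good parameter ideal $Q=(x_1,\dots,x_s)$ generated by a distinguished system of parameters lying in a sufficiently high power of $\m$; by Lemma 3.6 of \cite{GS03} (the same input used in Lemma \ref{F2.5}) such a $Q$ can be arranged so that, for every $i$, the canonical map $\phi_{D_i}\colon D_i/QD_i\to\H^{d_i}_\m(D_i)$ is surjective on socles. Distinguishedness forces $QD_i=(x_1,\dots,x_{d_i})D_i$, so the restriction of $Q$ to each $D_i$ is a genuine parameter ideal and $\mathrm{ir}_{D_i}(Q)$ is the index of reducibility of an honest parameter ideal of $D_i$. For each $i$ I would then run the socle--diagram argument of Lemma \ref{F2.5} on $0\to D_{i-1}\to D_i\to C_i\to 0$: since $C_i$ is Cohen--Macaulay of dimension $d_i>\dim D_{i-1}$, the surjectivity of $\phi_{D_i}$ on socles together with the injectivity on socles of $\phi_{C_i}\colon C_i/QC_i\to\H^{d_i}_\m(C_i)$ produces a short exact sequence of socles, whence $\mathrm{ir}_{D_i}(Q)=\mathrm{ir}_{D_{i-1}}(Q)+r_{d_i}(C_i)$. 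Telescoping from $D_0=0$ up to $D_\fkt=M$ yields $\mathrm{ir}_M(Q)=\sum_{i=1}^{\fkt}r_{d_i}(C_i)=\sum_{i\in\BZ}r_i(M)$, and therefore $\sum_{i\in\BZ}r_i(M)\le\mathrm{r}(M)$.

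The main obstacle is producing the single ideal $Q$ that is simultaneously socle-surjective at every level $D_i$ and verifying that the socle sequences are genuinely short exact. The delicate point is left exactness: one needs $\overline\iota\colon D_{i-1}/QD_{i-1}\to D_i/QD_i$ to be injective on socles, i.e. $(QD_{i-1}:_{D_{i-1}}\m)\cap QD_i\subseteq QD_{i-1}$, and this is exactly where the filter-regularity and distinguishedness of the chosen system of parameters must be invoked. Once $Q$ is pinned down, the telescoping is formal, so the entire weight of the proof rests on this simultaneous choice and the resulting socle bookkeeping.
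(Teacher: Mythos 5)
Your proof is correct and is essentially the paper's intended argument written out in full: the paper's one-line proof (``It follows from Lemma \ref{F2.5} and the definition of sequentially Cohen-Macaulay modules'') is precisely the iteration, along the dimension filtration, of the socle-sequence mechanism in the proof of Lemma \ref{F2.5}, with the Goto--Sakurai lemma supplying a single parameter ideal that is surjective on socles at every level $D_i$ --- exactly the construction you carry out. The left-exactness you flag as the remaining delicate point is settled the same way it is inside Lemma \ref{F2.5} itself: by distinguishedness $QD_i=(x_1,\dots,x_{d_i})D_i$ and $QD_{i-1}=(x_1,\dots,x_{d_i})D_{i-1}$, and $x_1,\dots,x_{d_i}$ is a $C_i$-regular sequence because $C_i$ is Cohen--Macaulay, so $\Tor_1^R\bigl(R/(x_1,\dots,x_{d_i}),C_i\bigr)=0$ and the row $0\to D_{i-1}/QD_{i-1}\to D_i/QD_i\to C_i/QC_i\to 0$ is exact as modules, hence exact on socles.
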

\begin{proof}
It follows from Lemma \ref{F2.5} and the definition of sequentially Cohen-Macaulay modules.
\end{proof}

\begin{lemma}\label{P4.60}
Let $M$ be a finitely generated $R$-module of dimension $s\ge 2$ and $\fkq$  a $g$-parameter  ideal of $M$ such that
 $$\mathrm{r}(M)\le f_0(\fkq,M).$$
	Then $C_\fkt$ is Cohen-Macaulay.
\end{lemma}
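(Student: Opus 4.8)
The plan is to induct on $s=\dim M$, peeling off the first element $x_1$ of a defining $g$-system of parameters and transporting Cohen--Macaulayness of the top piece back up via Lemma~\ref{L2.2}.

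\emph{Reduction modulo $x_1$.} Let $x_1,\dots,x_s$ be a $g$-system with $\fkq=(x_1,\dots,x_s)$, put $\overline M=M/x_1M$ and $\overline{\fkq}=(x_2,\dots,x_s)$. From Definition~\ref{gsp} and the results of \cite{Tr19} I would record two facts: $x_2,\dots,x_s$ is again a $g$-system on $\overline M$, and $x_1$ is a superficial element with $(0):_Mx_1=H^0_\fkm(M)$ (the inclusion $H^0_\fkm(M)\subseteq(0):_Mx_1$ is the distinguished condition, while superficiality forces $(0):_Mx_1$ to have finite length, hence to equal $H^0_\fkm(M)$). Thus Lemma~\ref{cut} applies and gives $f_0(\fkq,M)\le f_0(\overline{\fkq},\overline M)$. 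I would also note the elementary monotonicity $\mathrm{r}(\overline M)\le\mathrm{r}(M)$: a system of parameters $\bar y_2,\dots,\bar y_s$ of $\overline M$ lifts to the system of parameters $x_1,y_2,\dots,y_s$ of $M$, and since $M/(x_1,y_2,\dots,y_s)M\cong\overline M/(\bar y_2,\dots,\bar y_s)\overline M$ the two indices of reducibility coincide, so each is bounded by $\mathrm{r}(M)$.

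\emph{Inductive step ($s\ge 3$).} Here $\dim\overline M=s-1\ge 2$, and combining the two inequalities above with the hypothesis yields
$$\mathrm{r}(\overline M)\le\mathrm{r}(M)\le f_0(\fkq,M)\le f_0(\overline{\fkq},\overline M),$$
so $\overline M$ and $\overline{\fkq}$ satisfy the hypothesis of the lemma. By induction the top piece of $\overline M$ is Cohen--Macaulay; writing $N/x_1M$ for the unmixed component of $\overline M=M/\fkq_1M$, this top piece is exactly $M/N$. Lemma~\ref{L2.2}, applied with $j=1$, then gives that $C_\fkt$ is Cohen--Macaulay.

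\emph{Base case ($s=2$), the main obstacle.} Now Lemma~\ref{L2.2} is unavailable, since it requires $j\le s-2$, and the hypothesis must be exploited quantitatively. Because $\overline M$ is one-dimensional, $\overline{\fkq}^{\,n+1}=(x_2^{\,n+1})$ is a genuine parameter ideal of $\overline M$, so $f_0(\overline{\fkq},\overline M)=\mathrm{ir}_{\overline M}(\overline{\fkq}^{\,n+1})\le\mathrm{r}(\overline M)\le\mathrm{r}(M)$ for $n\gg 0$. Feeding this into the length estimate established in the proof of Lemma~\ref{cut}, namely
$$\mathrm{ir}_{\overline M}(\overline{\fkq}^{\,n+1})\ge\mathrm{ir}_M(\fkq^{n+1})-\mathrm{ir}_M(\fkq^{n})+\ell\big((\fkq^{n}M:_M\fkm)\cap W\big)$$
with $W=H^0_\fkm(M)$, and passing to $n\gg 0$ (where the first two terms on the right differ by $f_0(\fkq,M)$ and the last stabilizes to $r_0(M)$), I obtain
$$f_0(\fkq,M)+r_0(M)\le f_0(\overline{\fkq},\overline M)\le\mathrm{r}(M)\le f_0(\fkq,M).$$
This chain is squeezed to equalities, forcing $r_0(M)=0$, hence $W=H^0_\fkm(M)=0$, and forcing the inequality coming from Lemma~\ref{cut} to be an equality, i.e.\ the socle-lifting map induced by multiplication by $x_1$ is surjective for $n\gg0$. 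The delicate point — and the main obstacle — is to identify this vanishing defect with $\ell\big(N/(D_{\fkt-1}+x_1M)\big)$, equivalently with $\mathrm{Soc}\,H^1_\fkm(C_\fkt)$; granting this, one gets $N=D_{\fkt-1}+x_1M$, so $C_\fkt/x_1C_\fkt\cong M/N$ is Cohen--Macaulay of dimension $1$, and since $x_1$ is $C_\fkt$-regular, $C_\fkt$ is Cohen--Macaulay.
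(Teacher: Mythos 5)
Your proposal has a genuine gap, and it sits exactly where the lemma's difficulty lies: the case $s=2$. Everything you do before that point (the reduction $f_0(\fkq,M)\le f_0(\overline{\fkq},\overline M)$ via Lemma~\ref{cut}, the monotonicity $\mathrm{r}(\overline M)\le\mathrm{r}(M)$ by lifting parameter ideals, forcing $r_0(M)=0$ and equality in the chain) is correct but is only bookkeeping; the hard content is converting that numerical equality into the cohomological vanishing that makes $C_\fkt$ Cohen--Macaulay, and this is precisely the step you concede with ``granting this.'' The identification of the socle-sequence defect with $\ell\bigl(N/(D_{\fkt-1}+x_1M)\bigr)$, equivalently with $\mathrm{Soc}\,\H^1_\fkm(C_\fkt)$, is not proved, and there is no reason to expect it to be easier than the lemma itself: the socle of $\overline M/\overline{\fkq}^{\,n+1}\overline M$ mixes contributions from $\H^0_\fkm(\overline M)=N/x_1M$ and from the Cohen--Macaulay quotient $M/N$, and disentangling them is the whole problem. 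The paper never proves such an identification; instead it devotes essentially its entire proof to the dimension-two situation by a different mechanism: it passes to $A=M/\fkq_{s-2}M$, uses Schenzel's bound $\dim R/\fka_1(A)\le 1$ (\cite{SCH82}) to build an auxiliary parameter element $y=z^m$ with $y\H^1_\fkm(A)=0$ and $N=(0):_Ay^n$ (note: $y$ is \emph{not} taken from $\fkq$, unlike your $x_1$), computes $r_0(A/y^nA)=r_0(A)+r_1(A/N)$ and $r_1(A/y^nA)=r_1(A)+r_2(A)$ by diagram chases, bounds $f_0(\fkq;A/N)\le r_1(A/N)+r_2(A/N)$ using a second superficial element $y_0$, and then sandwiches these against $\mathrm{r}(A/y^nA)\le\mathrm{r}(M)\le f_0(\fkq,M)$ (via Lemma~\ref{F2.5}) to force $r_0(A)=r_1(A)=0$, i.e.\ $\H^1_\fkm(A)=0$, before invoking Lemma~\ref{L2.2}. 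Your proposal contains no substitute for this analysis.

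A secondary problem is the inductive step itself: you need that $x_2,\dots,x_s$ is again a $g$-system of parameters on $\overline M=M/x_1M$, which you attribute to ``the results of \cite{Tr19}'' without a precise statement. This is not automatic, because the definition of a $g$-system quantifies over the pieces $C_i$ of the dimension filtration, and the dimension filtration of $M/x_1M$ is not simply the image of the dimension filtration of $M$; so the descent of the $g$-condition needs an actual citation or proof. (Your other supporting facts are fine: for a distinguished $d$-sequence one does get $(0):_Mx_1=\H^0_\fkm(M)$ and the colon behavior needed for Lemma~\ref{cut}, and the type monotonicity argument is correct.) The paper sidesteps this issue entirely by cutting down to dimension two in one step with Lemmas~\ref{cut} and~\ref{L000} and applying Lemma~\ref{L2.2} with $j=s-2$, never needing the reduced system to be a $g$-system. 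So while your inductive scaffolding is a reasonable variant of the paper's reduction, the proof as written is incomplete at its core.
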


\begin{proof}
		Assume that $\q$ is generated by the $g$-system $x_1,\ldots,x_{s}$ of parameters of $M$. 
	Put   $A=M/\q_{s-2}M,$ and let $N$ denote the unmixed component of $A$. 
Korollar 2.2.4 in \cite{SCH82} say that if $R$ is a homomorphic image of
a Cohen-Macaulay local ring with the infinite residue field $k$ and
$M$ is a finitely generated $R$-module
then $\dim R/\fka_i(M)\le i$,
where $\fka_i(M)=\Ann H^i_\fkm(M)$ for all $i$. Moreover, since $\dim A=2$ and $\dim R/\fka_1(A)\le 1$,  we can choose $z$ such that $z$ is a parameter element of $A$
and $z\in \fka_1(A)$. Therefore
$zH^1_\fkm(A)=0$. 
Now, we derive from that $ N $ is the unmixed component of $A$  and the exact sequence
	$$0 \to N\to A \to A/N \to 0$$
	 that the map
$ H^1_\fkm(N )\to H^1_\fkm(A)$ is injective.
Therefore $zH^1_\fkm(N)=0$,  so $z \in \Ann N$. Thus, $x_{s-1},z$ is a distinguished system of parameters for $A$.
Since $\Ann D_{\fkt-1}+\q_{s-1}$ is an $\m$-primary ideal of $R$, we can choose $y:=z^m \in
  \Ann(D_{\fkt-1})$ for large enough $m$  such that $yH_\m^1 (A) = 0$ and $(0):_Ay=(0):_Ay^2$.

%

%
	Since $x_{s-1},y$ is a distinguished system of parameters for $A$, by Lemma 2.3 in \cite{Tr13}, we have $N=(0):_Ay^n$ for all $n$.  Hence we have that the following commutative diagram with exact rows for all $n\ge 2$.
	$$
	\xymatrix{
		0 \ar[r] & A/N \ar[r]^{\cdot y} \ar[d]^{\mathrm{id}} & A \ar[r] \ar[d]^{\cdot y^n} & A/yA \ar[r] \ar[d] & 0\\
		0 \ar[r] & A/N \ar[r]^{\cdot y^{n+1}} & A \ar[r] & A/y^{n+1}A \ar[r]  & 0.
	}
	$$
	We apply the functor $\H^1_\m(\bullet)$ to the above diagram to get the commutative diagram
	$$
	\xymatrix{
		 \H^1_\m(A/N)\ar[r]^{\alpha_1} \ar[d]^{\mathrm{id}} & \H^1_\m(A) \ar[d]^{\cdot y^n}\\
		 \H^1_\m(A/N)\ar[r]^{\alpha_{n+1}} & \H^1_\m(A),
	}
	$$ 
	where $\alpha_1$, $\alpha_{n+1}$ are canonical homomorphisms. Thus, $\alpha_{n+1}=y^n \circ \alpha_1=0$ for all $n\ge 3$, because of  the choice of $y$. Therefore  we get the commutative diagram
with exact rows $$
\xymatrix{
	0 \ar[r] & \H^0_\m(A) \ar[r] \ar[d]^{y} & \H^0_\m(A/y^nA) \ar[r] \ar[d] & \H^1_\m(A/N) \ar[r] \ar[d]^{\mathrm{id}} & 0\\
	0 \ar[r] & \H^0_\m(A) \ar[r]  & \H^0_\m(A/y^{n+1}A) \ar[r] & \H^1_\m(A/N) \ar[r]  & 0.
}
$$
 By applying the functor $\Hom(k;\bullet)$ to this diagram, we obtain a commutative diagram
$$
\xymatrix{
	\Hom(k,\H^1_\m(A/N))\ar[r]^{\beta_n} \ar[d]^{\mathrm{id}} & \Ext^1(k,\H^0_\m(A)) \ar[d]^{\cdot y}\\
		\Hom(k,\H^1_\m(A/N))\ar[r]^{\beta_{n+1}}  & \Ext^1(k,\H^0_\m(A)),
}
$$ 
	where $\beta_n$, $\beta_{n+1}$ are connecting homomorphisms. Thus, $\beta_{n+1}=y \circ \beta_n=0$, since $y\in \m$. Therefore the sequence
	$$0\to (0):_{\H^0_\m(A)}\m\to (0):_{\H^0_\m(A/y^nA)}\m\to (0):_{\H^1_\m(A/N)}\m\to 0$$
	 is exact,  so $r_0(A/y^nA)= r_0(A)+r_1(A/N)$ for all $n\ge 3$. 
	 
	 On the other hand, since $\alpha_{n+1}=0$ for all $n\ge 1$, we get the commutative diagram
	$$
	\xymatrix{
		0 \ar[r] & \H^1_\m(A) \ar[r] \ar[d]^{y} & \H^1_\m(A/y^nA) \ar[r] \ar[d] & (0):_{\H^2_\m(A/N)}y^n \ar[r] \ar[d]^{\mathrm{id}} & 0\\
		0 \ar[r] & \H^1_\m(A) \ar[r]  & \H^1_\m(A/y^{n+1}A) \ar[r] & (0):_{\H^2_\m(A/N)}y^{n+1} \ar[r]  & 0,
	}
	$$ 
	 whose rows are exact sequences. By applying the functor $\Hom(k;\bullet)$ to this diagram, we obtain a commutative diagram
	 $$
	 \xymatrix{
	 	\Hom(k,(0):_{\H^2_\m(A/N)}y^n)\ar[r]^{\gamma_n} \ar[d]^{\mathrm{id}} & \Ext^1(k,\H^1_\m(A)) \ar[d]^{\cdot y}\\
	 	\Hom(k,(0):_{\H^2_\m(A/N)}y^n)\ar[r]^{\gamma_{n+1}}  & \Ext^1(k,\H^1_\m(A)),
	 }
	 $$ 
	 where $\gamma_n$, $\gamma_{n+1}$ are connecting homomorphisms. Thus, $\gamma_{n+1}=y \circ \gamma_n=0$, since $y\in \m$. Therefore the sequence
	 $$0\to (0):_{\H^1_\m(A)}\m\to (0):_{\H^1_\m(A/y^nA)}\m\to (0):_{\H^2_\m(A/N)}\m\to 0$$
	 is exact. Since $\dim N<2$, so we have 
	 $r_1(A/y^nA)= r_1(A)+r_2(A/N)=r_1(A)+r_2(A)$
	 for all $n\ge 3$. 
	 Since $\dim A/y^nA=1$, 	by Lemma \ref{F2.5}, we have
	 	 $$r_0(A/y^nA)+r_1(A/y^nA)\le \mathrm{r}(A/y^nA).$$
	By the definition of type of local rings and the hypothesis,  we have 
	$$\begin{aligned}
	r_0(A)+r_1(A/N)+r_1(A)+r_2(A)&=r_0(A/y^nA)+r_1(A/y^nA)\\
	&\le \mathrm{r}(A/y^nA)\le \mathrm{r}(M)
	\le f_0(\fkq,M).
	\end{aligned}$$ 
On the other hand, by Lemma \ref{cut} and Lemma \ref{L000}, we have
	$$\begin{aligned}
	 f_0(\fkq,M)\le f_0(\fkq,A)\le f_0(\fkq,A/N).  
	\end{aligned}$$ 
	Since $\dim A/N=2$ and $N$ is the unmixed component of $A$, $A$ is generalized Cohen-Macaulay. Thus we can choose a superficial element $y_0$ of $A/N$ with respect to $\q$ such that $y_0\H^1_\m(A/N)=0$. It follows from the  exact sequence 
	$$\xymatrix{0\ar[r] & A/N \ar[r]^{.y_0} &A/N \ar[r]&A/(y_0A+N)\ar[r]&0}$$
	and $y_0H^1_\m(A/N)=0$
	that  the sequence
$$\xymatrix{0\ar[r] & \H^1_\m(A/N) \ar[r] & \H^1_\m(A/(y_0A+N))\ar[r]&\H^2_\m(A/N) \ar[r]&0}$$
is exact. By applying the functor $\Hom(k;\bullet)$, we have 
$$r_1(A/(y_0A+N))\le r_1(A/N)+r_2(A/N)$$  
Since  $y_0$ is $A/N$-regular, by Lemma \ref{cut}, we have 	
 $$f_0(\frak q;A/N)\le f_0(\frak q;A/(y_0A+N)).$$
Because $\dim (A/(y_0A+N))=1$,  we have $f_0(\frak q;A/(y_0A+N))\le r_1(A/(y_0A+N))$ and then
 $$f_0(\frak q;A/N)\le r_1(A/N)+r_2(A/N).$$
       Thus, $r_1(A)=0$,  so  $\H^1_\fkm(A)=0$. 
It follows from the exact sequence
$$0\to N\to A\to A/N\to 0$$
that the sequence
$$0\to \H^1_\fkm(N)\to \H^1_\fkm(A) \to  \H^1_\fkm(A/N)\to 0 $$
is exact and $\H^1_\fkm(A/N)=0$. Hence, $C_{\fkt}$ is Cohen-Macaulay, because of Lemma \ref{L2.2}.
\end{proof}

We are now ready to prove the main theorems of this section.

\begin{proof}[{\bf Proof of Theorem \ref{T0.1}}]

$(i)\Rightarrow (ii)$ follows from Theorem 5.2 in \cite{CQT15}.\\
$(ii) \Rightarrow (iii)$ is trivial.\\
$(ii) \Rightarrow (i)$  By Lemma \ref{P4.60}, we have $C_{\fkt}=M/D_{\fkt-1}$ is Cohen-Macaulay. 
By Lemma \ref{F2.5}, there exists a parameter ideal $Q$ such that 
$$ \mathrm{ir}_{D_{\fkt-1}}(Q)+ r_s(M)\le \mathrm{r}(M)$$
 It follows from Lemma \ref{L000} and that $M/D_{\fkt-1}$ is Cohen-Macaulay, that we have
 $$\mathrm{r}(M)\le f_0(\q;M)\le f_0(\q;M/D_{\fkt-1})=r_s(M).
 $$
Therefore, we have
$\mathrm{ir}_{D_{\fkt-1}}(Q)=0$, so $D_{\fkt-1}=0$. Hence, $M$ is Cohen-Macaulay, as required.

\end{proof}

\noindent
{\bf Proof of Theorem \ref{T6.6}.} $(i)\Rightarrow (ii)$ follows from Theorem 1.1 in \cite{Tr15}.\\
$(ii) \Rightarrow (iii)$ is trivial.\\
$(iii) \Rightarrow (i)$ 	Let $I=\q:\m$. First, assume that $e_0(\m;R)>1$. Then by Proposition 2.3 in \cite{GS03}, we get that $\m I^n = \m\q^n$ for all $n$. Thus, $I^n\subseteq \q^n:\m$ for all $n$. It follows that
	$$\ell_R(R/\fkq^{n+1})-\ell_R(R/I^{n+1})=\ell_R(I^{n+1}/\fkq^{n+1})\le \ell_R((\q^{n+1}:\m)/\q^{n+1}).$$ 
	Therefore, $e_1(I;R)-e_1(\q;R)\le f_0(\q;R)$,  so $\mathrm{r}(M)\le f_0(\q;R)$. By Theorem \ref{T0.1}, $R$ is Cohen-Macaulay, as required.

	Now  assume that $e_0(\m;R)=1$. Let $\fku$ denote the unmixed component of $R$, and put $S=R/\fku$. Since $\dim \fku<\dim R$  we have $e_0(\m;S)=1$. By Theorem 40.6 in \cite{Na}, $S$ is Cohen-Macaulay,  so $S$ is a regular local ring. 
	By Lemma \ref{F2.5}, there exists a parameter ideal $Q$ of $R$ such that 
$$ \mathrm{ir}_{\fku}(Q)+ r_d(R)\le \mathrm{r}(R).$$
It follows from $\mathrm{r}(R)\le e_1(\q:\m)-e_1(\q)$ and the following claim that  $\mathrm{ir}_{\fku}(Q)=0$,  so $\fku=0$. Hence $R$ is Cohen-Macaulay, as required.

\begin{claim}
$e_1(\q:\m)-e_1(\q)\le r_d(R).$

\end{claim}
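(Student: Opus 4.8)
Write $I=\q:\m$. The plan is to transfer the computation of $e_1(I)-e_1(\q)$ to the regular quotient $S=R/\fku$, where it can be evaluated exactly, and to match it with $r_d(R)$. First, since $\dim\fku<d$, the local cohomology sequence of $0\to\fku\to R\to S\to 0$ gives $H^d_\m(\fku)=0=H^{d+1}_\m(\fku)$, hence $H^d_\m(R)\cong H^d_\m(S)$ and so $r_d(R)=r_d(S)$. As $S$ is regular, hence Cohen--Macaulay, $r_d(S)$ equals its type $\mathrm{r}(S)=\mathrm{ir}_S(\q S)=\ell((\q S:\m S)/\q S)$; thus it suffices to bound $e_1(I;R)-e_1(\q;R)$ by $\mathrm{r}(S)$.

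The key structural input I would prove is that $\q$ is a reduction of $I$ of reduction number one, i.e. $I^2=\q I$. From $\q\subseteq\m^2$ one gets $I\subseteq\m$ and then $I^2\subseteq\m I\subseteq\q$, so that $\m I^2\subseteq\q I$ and $I^2/\q I$ is a $k$-vector space sitting inside $\q/\q I$; upgrading $I^2\subseteq\q$ to $I^2\subseteq\q I$ is where the $g$-parameter ($d$-sequence) structure of $\q$ together with $\q\subseteq\m^2$ must be used. Granting $I^2=\q I$, the ideal $I$ is integral over $\q$, so $e_0(I;N)=e_0(\q;N)$ for every finitely generated $R$-module $N$, and in particular for $N\in\{R,\fku,S\}$.

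With this I would compare $R$ and $S$. By Artin--Rees, $\ell(R/J^{n+1})-\ell(S/J^{n+1}S)=\ell(\fku/(\fku\cap J^{n+1}))$ is, for $n\gg0$, a polynomial of degree $\dim\fku<d$ with leading coefficient $e_0(J;\fku)$. Since $e_0(I;\fku)=e_0(\q;\fku)$ and $e_0(I;R)=e_0(\q;R)$, the degree-$d$ terms and the $\fku$-contributions to the coefficient of $\binom{n+d-1}{d-1}$ cancel when I subtract the $I$- and $\q$-expansions, and reading off that coefficient yields
$$e_1(I;R)-e_1(\q;R)=e_1(IS;S)-e_1(\q S;S).$$

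Finally I would evaluate the right-hand side in the Cohen--Macaulay ring $S$. There $e_1(\q S;S)=0$, while $\m I\subseteq\q$ gives $IS\subseteq\q S:\m S$, so $\ell(IS/\q S)\le\mathrm{ir}_S(\q S)=\mathrm{r}(S)$, and $(IS)^2=\q S\cdot IS$ (the image of $I^2=\q I$) exhibits reduction number one. The Huneke--Ooishi formula then gives $e_1(IS;S)=e_0(IS;S)-\ell(S/IS)=\ell(IS/\q S)\le\mathrm{r}(S)=r_d(R)$, which is the claim. The main obstacle is exactly the reduction-number-one identity $I^2=\q I$: in the nontrivial case $IS=\q S:\m S$, Northcott's inequality forces $e_1(IS;S)\ge\ell(IS/\q S)=\mathrm{r}(S)$, so the claim is an equality there and stands or falls with this identity, which is precisely where the hypothesis $\q\subseteq\m^2$ enters.
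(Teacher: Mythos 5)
Your overall strategy---transferring the computation to $S=R/\fku$ and evaluating there---is the same as the paper's, and several of your steps are sound: the identification $r_d(R)=r_d(S)$, the Artin--Rees comparison showing that $e_1(J;R)$ and $e_1(JS;S)$ differ exactly by $e_0(J;\fku)$ when $\dim\fku=d-1$ (and not at all when $\dim\fku\le d-2$), and the Northcott/Huneke--Ooishi evaluation inside the regular ring $S$. But the proposal has a genuine gap, and you name it yourself: everything hinges on the identity $I^2=\q I$ (equivalently, on $\q$ being a reduction of $I=\q:\m$) \emph{in the ring $R$ itself}, and you never prove it---you only assert that the $d$-sequence structure and $\q\subseteq\m^2$ ``must be used.'' This is not a routine step that can be deferred. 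The known results of this type apply only outside the present situation: Corso--Polini--Vasconcelos \cite[Theorem 2.1]{CPV} requires the ambient ring to be Cohen--Macaulay, whereas here $R$ is precisely a ring not yet known to be Cohen--Macaulay, and Goto--Sakurai \cite[Proposition 2.3]{GS03} requires $e_0(\m;R)\ge 2$, whereas the claim sits exactly in the complementary case $e_0(\m;R)=1$. The identity is also genuinely delicate: it fails in dimension one even for regular $R$ (take $R=k[[t]]$, $\q=(t^2)$, $I=(t)$; then $I^2=(t^2)\neq(t^3)=\q I$ despite $\q\subseteq\m^2$), so any proof must use $d\ge 2$ and the structure of $\q$ in an essential way, and no such argument is given.

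It is worth noting that the paper's own proof is organized precisely to avoid ever asserting anything about $\q:\m$ relative to $\q$ inside $R$. It invokes integrality only where it is licensed: in $S$ (regular, $\q S\subseteq(\m S)^2$, via \cite[Theorem 2.1]{CPV}), which combined with \cite[Theorem 1.1]{Tr15} yields $e_1((\q:\m)S;S)-e_1(\q S;S)\le r_d(S)=r_d(R)$; and, in the troublesome case $\dim\fku=d-1$, it passes to $\bar R=R/(x)$ for a parameter $x\in\q$ with $(x)\cap\fku=0$, where $e_0(\m;\bar R)=e_0(\m;\bar S)+e_0(\m;\fku)\ge 2$ makes \cite[Proposition 2.3]{GS03} applicable, and then uses additivity of $e_0$ along $0\to\fku\to\bar R\to\bar S\to 0$ to obtain the cancellation $e_0(\q:\m,\fku)=e_0(\q,\fku)$---which is exactly the cancellation you tried to extract from the unproved reduction identity. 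If you could actually establish $I^2=\q I$ (or even just $I\subseteq\overline{\q}$) for $g$-parameter ideals $\q\subseteq\m^2$ in a possibly non-Cohen--Macaulay ring with $e_0(\m;R)=1$, your argument would close and would be a cleaner route; as written, the proposal is an outline whose hardest step is missing.
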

\begin{proof}
By Theorem 1.1 in \cite{Tr15}, we have $$e_1(\q S:\m S;S)-e_1(\q S;S)=r_d(S)=r_d(R).$$
Since $(\q:\m)S\subset \q S:\m S$, we have
$ \ell_R(S/(\q:\m)^{n+1}S)\ge \ell_R(S/(\q S:\m S)^{n+1} )$, for all $n\ge 0$. Since $S$ is a regular local ring and $\q\subseteq \m^2$, by Theorem 2.1 in \cite{CPV}, we have $\q S:\m S$ is integral over $\q S$ and so $e_0(\q S:\m S;S)=e_0((\q :\m)S;S)=e_0(\q S;S)$. Therefore, we have $$e_1((\q :\m )S;S)\le e_1(\q S :\m S;S).$$ 
Hence 
$$e_1((\q :\m )S;S)-e_1(\q S;S)\le r_d(R).$$	
On the other hand,	 by Lemma 3.4 in \cite{CGT}, we have 
\begin{center}
$e_1(\q)= \begin{cases}
e_1(\q,S) & \text{ if } \dim \fku\le d-2\\
e_1(\q,S)-e_0(\q,\fku) & \text{ if } \dim \fku= d-1
\end{cases}$
\quad $e_1(\q:\m)=\begin{cases}
e_1(\q:\fkm,S) & \text{ if } \dim \fku\le d-2\\
e_1(\q:\fkm,S)-e_0(\q:\m,\fku) & \text{ if } \dim \fku= d-1.
\end{cases}$
\end{center}

If $\dim \fku<d-1$, then we have  $e_1(\q:\m)-e_1(\q)=e_1(\q:\m,S)-e_1(\q,S)\le r_d(R),$ as required.

	Now, we can  assume that $\dim \fku= d-1$. Then we have
	$$e_1(\q:\m)-e_1(\q)\le r_d(R)-e_0(\q:\m,\fku)+e_0(\q,\fku).$$
	By the Prime Avoidance Theorem, we can choose a parameter element $x\in\q$ of $R$ such that $(x)\cap \fku=0$. Put $\bar R=R/(x)$, and $\bar S=S/(x)$. Then we have the following exact sequence
	$$0\to \fku \to \bar R\to \bar S\to 0.$$
	 Consequently, 
	 $$e_0(\fkm,\bar R)=e_0(\fkm,\bar S)+e_0(\fkm,\fku)\ge 2$$
	 so that by Proposition 2.3 in \cite{GS03}, $\fkq\bar R:\fkm=(\fkq:\fkm)\bar R$ is integral over $\q\bar R$. Thus, $e_0(\fkq:\fkm;\bar R)=e_0(\fkq,\bar R)$.  Since $x$ is regular in $S$, $e_0(\fkq:\fkm;\bar S)=e_0(\fkq,\bar S)$. Hence, we have
	 $$\begin{aligned}
	 e_1(\q:\m)-e_1(\q)&\le r_d(R)-e_0(\q:\m,\fku)+e_0(\q,\fku)\\&=r_d(R)-(e_0(\q:\fkm,\bar R)-e_0(\q:\m,\bar S))+(e_0(\fkq;\bar R)-e_0(\q,\bar S))=r_d(R).
	 \end{aligned}$$
\end{proof}


Let us note the following example of  non-Cohen-Macaulay local rings $R$  where we have
$f_0(\fkq;R) = e_1(\q:\m)-e_1(\q)$ for all parameter ideals $\q$. 
\vspace{3mm}

\begin{example}
 Let $d \geqslant 3$ be an integer and let $U = k[[X_1,X_2, \ldots ,X_d, Y ]]$ be the
formal power series ring over a field $k$. We look at the local ring
$R = U/[(X_1,X_2, \ldots ,X_d) \cap (Y )]$.
Then $R$ is a reduced ring with $\dim R = d$. We put $A = U/(Y )$ and $D =
U/(X_1,X_2,\ldots ,X_d)$. Let $\fkq$ be a parameter ideal in $R$. Then, since $D$ is a DVR and $A$ is a
regular local ring with $\dim A = d$, thanks to the exact sequence $0\to D \to R \to A \to 0$,
 we get that $\depth R = 1$ and  the sequence
$$\xymatrix{0\ar[r]&D/\fkq^{n+1}D\ar[r]&R/\fkq^{n+1}R\ar[r]&A/\fkq^{n+1}A\ar[r]&0}$$
is exact. By applying the functor
$\mathrm{Hom}_R(R/\frak m, \bullet)$ we obtain the following exact sequence
$$\xymatrix{0\ar[r]&[\fkq^{n+1}:_D\fkm]/\fkq^{n+1}\ar[r]&[\fkq^{n+1}:_R\fkm]/\fkq^{n+1}\ar[r]&[\fkq^{n+1}:_A\fkm]/\fkq^{n+1}\ar[r]&0}.$$
Therefore, we have
$$\begin{aligned}
\ell_R([\fkq^{n+1}:_R\fkm]/\fkq^{n+1})&=\ell_R([\fkq^{n+1}:_A\fkm]/\fkq^{n+1})+\ell_R([\fkq^{n+1}:_D\fkm]/\fkq^{n+1})\\
&=\binom{d-1+n-1}{d-1}+ 1
\end{aligned}$$
for all integers $n \geqslant 0$, whence $f_0(\fkq;R)=1=r_d(R)$ for every parameter ideal $\fkq$ in $\fkm$.
Since $A$ is regular local ring and $d\ge 3$, we have 
$$e_1(\q:\m)-e_1(\q)=r_d(R)$$ 
However $R$ is not a Cohen-Macaulay ring, since $H^1_\fkm(R) = H^1_\fkm(D)$ is not a finitely generated $R$-module,
where $\fkm$ denotes the maximal ideal in $R$, but $R$ is sequentially Cohen-Macaulay.

\end{example}

Now let us note the following example of  non-Cohen-Macaulay local rings $R$ of dimension $1$ where we have
$f_0(\fkq;R) =\mathrm{r}(R)$ for all parameter ideals $\q$. 

\begin{example}
Let $R=k[[X,Y,Z]]/((X^a)(X,Y,Z)+(Z^b))$ with $a$, $b\ge 2$, $A=k[[X,Y,Z]]$, $\fkn$ and $\fkm$ be the maximal ideal of $A$ and $R$, respectively. Then we have
\begin{center}
$\dim R=1 $, $\H^0_\fkm(R)=(X^a,Z^b)/((X^a)\fkn+(Z^b))\cong A/\fkn$ and $R/\H^0_\fkm(R)=A/(X^a,Z^b)$.
\end{center}
Therefore $R$ is not a Cohen-Macaulay ring. Now we claim that  for any parameter ideal $\q$ of $R$, we have
$$\mathrm{ir}_R(\q)=2.$$
Moreover $\mathrm{r}(R)\le f_0(\q;R)$ for any parameter ideal $\q$ of $R$.
Indeed, let $F=cX+dY+eZ$ be an element of $A$ having $\q$ is generated by its image in $R$. Then $F$, $X^a$, $Z^b$ form an $A$-regular, so we have an exact sequence
$$\xymatrix{0\ar[r]&A/\fkn\ar[r]& R/\fkq R\ar[r]&A/(X^a,Z^b,F)\ar[r]&0.}$$ 
Consider $\Delta=dX^{a-1}Z^{b-1}$, then $\Delta$ is a generator for both of the socles of $R/\fkq R$ and $A/(X^a,Z^b,F)$. Therefore $\mathrm{ir}_R(\q)=2,$ as required.
\end{example}



\begin{thebibliography}{99}
\bibitem{CHV} A. Corso, C. Huneke, W.V. Vasconcelos, {\it On the integral closure of ideals},  Manuscripta Math. 95 (1998) 331--347.

\bibitem{CPV} A. Corso, C. Polini, W.V. Vasconcelos, {\it Links of prime ideals.} Math. Proc. Cambridge Philos. Soc. 115 (1994), no. 3, 431--436.

\bibitem{CC} D. T. Cuong and N. T. Cuong, {\it On sequentially Cohen-Macaulay modules,}  Kodai Math. J.,  \textbf{30} (2007), 409--428.

\bibitem{CST} N. T. Cuong,  P. Schenzel and  N. V. Trung, {\it Verallgemeinerte Cohen-Macaulay-Moduln}, M. Nachr., {\bf 85} (1978), 57--73.



\bibitem{CGT}N.T. Cuong, S. Goto and H.L. Truong, {\it Hilbert coefficients and sequentially Cohen–Macaulay modules,}   J. Pure and Appl. Algebra   {\bf 217} (2013), 470--480

\bibitem{CQ11} N.T. Cuong and P.H. Quy, {\it A splitting theorem for local cohomology and its applications,}  J. Algebra {\bf 331} (2011), 512--522.

\bibitem{CQT15} N.T. Cuong, P.H. Quy and H.L. Truong, {\it On the index of reducibility in Noetherian modules,}  J. Pure and Appl. Algebra {\bf 219} (2015), 4510--4520.


\bibitem{CT08} N.T. Cuong and H.L. Truong, {\it Asymptotic behaviuor of parameter ideals in generalized Cohen-Macaulay module,}  J. Algebra {\bf 320} (2008), 158--168.


\bibitem{GGHOPV} L.Ghezzi, S.Goto, J.-Y.Hong, K.Ozeki, T.T.Phuong, W.V.Vasconcelos, {\it The first Hilbert coefficients of parameter ideals}, J. Lond. Math. Soc. (2) {\bf 81} (2010) 679--695.

\bibitem{GJS10} S. Goto, J. Horiuchi and H. Sakurai, {\it Quasi--socle ideals in Buchsbaum rings,}  Nagoya Math. J. {\bf 200} (2010), 93--106

\bibitem{GN01} S. Goto and Y. Nakamura,  {\it Multiplicity and tight closures of parameters,}  J. Algebra,  {\bf 244} (2001),  no. 1, 302--311.

\bibitem{GS03} S. Goto and H. Sakurai, {\it The equality $I^2 = QI$ in Buchsbaum rings,}  Rend. Sem. Mat. Univ. Padova {\bf 110} (2003), 25--56.

\bibitem{GNi} S. Goto and K. Nishida, {\it Hilbert coefficients and Buchsbaumness of associated graded rings}, J. Pure and Appl. Algebra {\bf 181} (2003) 61--74.


\bibitem{GS84} S. Goto and N. Suzuki, {\it Index of reducibility of parameter ideals in a local ring,}  J. Algebra {\bf 87} (1984), 53--88.

\bibitem{G51} W. Gr\"{o}bner. {\it Ein Irreduzibilit\"{a}tskriterium f\"{u}r Prim\"{a}rmideale in kommutativen Ringen.}  Monatsh. Math. {\bf 55} (1951). 138--145.

\bibitem{H82} C. Huneke, {\it The Theory of d-Sequences and Powers of Ideals,}  Advances in Mathematics {\bf 46} (1982), 249--279.

\bibitem{Na} M. Nagata, Local rings, {\it Interscience New York}, 1962.

\bibitem{N21} E. Noether, {\it Idealtheorie in Ringbereichen,}  Math. Ann. {\bf 83} (1921), 24--66.

\bibitem{N57} D.G. Northcott, {\it On irreducible ideals in local rings,}  J. London Math. Soc. {\bf 32} (1957),82--88.

\bibitem{K} V. Kodiyalam, {\it Homological invariants of powers of an ideal.} Proc. Amer. Math. Soc. 118 (1993) 757--764.



\bibitem{SCH82} P. Schenzel, Dualisierende Komplexe in der lokalen Algebra und Buchsbaum-Ringe. (German). {\it Lecture Notes in Mathematics}, 907. Springer-Verlag, Berlin-New York, 1982. vii+161 pp.

\bibitem{SCH} P. Schenzel,  {\it On the dimension filtration and Cohen-Macaulay filtered modules,}
 Van Oystaeyen, Freddy (ed.), Commutative algebra and algebraic geometry, New York: Marcel Dekker. Lect. Notes Pure Appl. Math., {\bf 206}(1999), 245--264. 





\bibitem{T83} N. V. Trung,  {\it Absolutely superficial sequence,}  Math. Proc. Cambrige Phil. Soc, {\bf 93} (1983), 35-47.

\bibitem{T} N. V. Trung, {\it Toward a theory of generalized Cohen-Macaulay modules,}  Nagoya Math. J., {\bf 102} (1986), 1-49.


\bibitem{Tr13} H.L. Truong,  {\it Index of reducibility of distinguished parameter ideals and sequentially Cohen-Macaulay modules,}  Proc. Amer. Math. Soc.  {\bf 141} (2013), 1971--1978.

\bibitem{Tr14} H.L. Truong,  {\it Index of reducibility of parameter ideals and Cohen-Macaulay rings,}  J. Algebra, {\bf 415} (2014), 35--49.

\bibitem{Tr15} H.L. Truong,  {\it Chern coefficients and Cohen-Macaulay rings,}  J. Algebra, {\bf 490} (2017), 316--329.

\bibitem{Tr19} H.L. Truong,  {\it The eventual index of reducibility  of parameter ideals and the sequentially Cohen-Macaulay property},  Arch. Math. (Basel) 112 (2019), no. 5, 475--488.

\bibitem{V8} W. V. Vasconcelos,  {\it The Chern cofficient of local rings},  { Michigan Math }, {\bf 57} (2008), 725--713.
\end{thebibliography}
\end{document}